\newtheorem{theorem}{Theorem}[section]
\newtheorem{proposition}{Proposition}[section]
\newtheorem{lemma}{Lemma}[section]
\newtheorem{corollary}{Corollary}[section]
\newtheorem{definition}{Definition}[section]
\numberwithin{equation}{section} \numberwithin{theorem}{section}
\numberwithin{proposition}{section} \numberwithin{lemma}{section}
\numberwithin{corollary}{section}
\numberwithin{definition}{section} \numberwithin{remark}{section}
\newcommand{\BibTeX}{{\scshape Bib}\kern-.08em\TeX}
\newcommand{\T}{\S\kern .15em\relax }
\newcommand{\AMS}{$\mathcal{A}$\kern-.1667em\lower.5ex\hbox
        {$\mathcal{M}$}\kern-.125em$\mathcal{S}$}
\newcommand{\R}{\mathbb{R}}
\title{Stable domains for higher order elliptic operators}
\date {Version 1, mai 2023}
 \author{Jean-Fran\c{c}ois Grosjean, Antoine Lemenant and R\'emy Mougenot}
\address{Université de Lorraine, CNRS, IECL, F-54000 Nancy, France}
\email{jean-francois.grosjean@univ-lorraine.fr, antoine.lemenant@univ-lorraine.fr, remy.mougenot@univ-lorraine.fr}
\keywords{Capacity, stable, $\gamma_m-$convergence, shape optimisation}
\date{\today}
\begin{document}

\maketitle
 
\begin{abstract} This paper is devoted to prove that any domain satisfying a $(\delta_0,r_0)-$capacity condition of first order is automatically $(m,p)-$stable  for all $m\geqslant 1$ and $p\geqslant 1$, and for any dimension $N\geqslant 1$. In particular, this includes regular enough domains such as $\mathscr{C}^1-$domains, Lipchitz domains, Reifenberg flat domains, but is weak enough to also includes cusp points. Our result  extends some of the results of Hayouni and Pierre   valid only for $N=2,3$, and extends also the results of Bucur and Zolesio for higher order operators, with a different and simpler proof.  
\end{abstract}

 \vspace{1cm}

\tableofcontents

\newpage

\section{Introduction}

Let $\Omega \subset \R^N$ be a bounded and open set. Following \cite{HePi06,BuBut05,BuZo94}, we say that $\Omega$ is $(m,p)-$stable if 
$$W^{m,p}(\R^N) \cap \left\{u =0 \text{ a.e. in } \overline{\Omega}^c    \right\} = W^{m,p}_0(\Omega).$$
This notion  is related to the continuity of a  $2m-$order elliptic PDE with respect to  domain perturbation. In particular, if $\Omega$ is $(m,2)-$stable, then it implies that for any sequence of domains $(\Omega_n)_{n\in \mathbb{N}}$ converging to $\Omega$ in a certain  Hausdorff sense, one has that $(u_n)_{n\in \mathbb{N}}$ converges strongly in $H^{m}$ to $u$, where $u_n$ is the  unique solution in $H^m_{0}(\Omega_n)$ for the equation $(-\Delta)^m(u_n)=f$ in $\Omega_n$, and $u$ is the solution of the same problem in $\Omega$. It is also equivalent to the convergence of $(W^{m,p}_0(\Omega_n))_{n\in \mathbb{N}}$ to $W^{m,p}_0(\Omega)$ in the sense of Mosco (see Section \ref{stability}).

In the literature, a lot of attention has been devoted to the case $m=1$ and $p=2$ because of its relation to the Laplace operator. On the other hand, very few results are available for the higher order spaces $H^m_0(\Omega)$, related to bi-harmonic or more generally poly-harmonic equations, that have a lot of applications.  The objective of this paper is to give a short and elementary proof of the fact that any domain which is ``regular enough'' is always $(m,p)-$stable for all $m,p$ and all dimensions $N$.  

 Notice that in general, the stability for $W^{m,p}_0(\Omega)$ does not simply reduce to the one of $W^{1,p}_0(\Omega)$. To enlight this fact we recall that for every open set $\Omega \subset \R^N$, we have the characterisation (see for instance \cite{AdHe96})
$$W^{m,p}_0(\Omega)= W^{m,p}(\R^N)\cap \left\{\nabla^ku|_{\Omega^c}=0 \; \quad (m-k,p)-\text{q.e.  for all }   k\leqslant m-1  \; \right\},$$
where $\nabla^k u := (\partial^\alpha u)_{\vert \alpha \vert = k }$ and $\partial^\alpha u$ is the  $(m-k,p)-$quasicontinuous representative, which is in particular defined pointwise $(m-k,p)-$q.e.
If $\Omega$ is $(1,p)-$stable, then for any $|\alpha|\leqslant m-1$ and from the assumption $\partial^\alpha u=0$ a.e. in $\overline{\Omega}^c$ we would only deduce that $\partial^\alpha u=0$ $(1,p)-$q.e. on $\Omega^c$,  whereas in order to prove that $u\in W^{m,p}_0(\Omega)$ we would need the stronger condition $\partial^\alpha u=0$ $(m-|\alpha|,p)-$q.e. on $\Omega^c$.

In \cite{HaPi01}, Hayouni and Pierre exploited the compact embedding of  $H^2$  into continuous functions in dimensions 2 and 3, in order to get some stability results for the space $H^2_0$. In particular, they proved that, in dimension 2 and 3, any $(1,2)-$stable domain is automatically $(2,2)-$stable (see \cite{HaPi01} or \cite{HePi06}). They also proved in the same paper that, in dimensions 2 and 3, any sufficiently smooth domain is a $(2,2)-$stable domain.  

In the present paper, we show that there is no true restriction on dimension $N$ to obtain  $(m,p)-$stability.  Our main result says  that  any domain that satisfies a classical $(1,p)-$ capacitary condition will be automatically $(m,p)-$stable, in any dimension, and for any $m$. This includes a large class or ``regular'' domains such as $\mathscr{C}^1-$domains, Lipschitz domains,  Reifenberg-flat domains, domains satisfying the so-called external corkscrew condition (see Definition \ref{corkscrew}), $\varepsilon$-cone property, or even domains with segment property which allows domains with cusps, or more generally domains with the so called fat cone property \cite{bz2}.

For the rest of this paper, we restrict ourselves to open subset of a fixed ball $D\subset \mathbb{R}^N$, and we denote the set of admissible domains by
$$ \mathscr{O}(D) := \left\{ \Omega \; \middle| \; \Omega \subseteq D \text{ is open}\right\}.$$\newline

\begin{definition}
    Let $r_0>0$ and $\delta_0>0$. An open set $\Omega\subseteq \mathbb{R}^N$ has the $(r_0, \delta_0)-$capacitary condition if for all $x\in \partial\Omega$ and for all $r\leqslant r_0$,
    \begin{eqnarray}
\frac{{\rm Cap}_{1,p}(\overline{\Omega}^c\cap B(x,r))}{{\rm Cap}_{1,p}( B(x,r))}\geqslant \delta_0\label{cond}.
\end{eqnarray}
The class of open subset of $D$ having the $(r_0, \delta_0)-$capacity condition is denoted by $\mathscr{O}^{\delta_0,r_0}_{\textnormal{cap}}(D)$.
\end{definition}\leavevmode

Here is our main statement.\newline

\begin{theorem}\label{main}If $\Omega\in\mathscr{O}^{\delta_0,r_0}_{\textnormal{cap}}(D)$ satisfies $|\partial  \Omega|=0$, then $\Omega$ is $(m,p)-$stable for any $m\geqslant 1$ and $1\leqslant p <+\infty$.
\end{theorem}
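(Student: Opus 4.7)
The inclusion $W^{m,p}_0(\Omega) \subseteq W^{m,p}(\R^N)\cap\{u=0\text{ a.e.\ on }\overline{\Omega}^c\}$ is immediate by extension by zero (using $|\partial\Omega|=0$ to identify $\Omega^c$ with $\overline{\Omega}^c$ up to a null set). For the reverse inclusion, fix $u \in W^{m,p}(\R^N)$ with $u=0$ a.e.\ on the open set $\overline{\Omega}^c$. Since a Sobolev function vanishing a.e.\ on an open set vanishes identically there, every partial derivative $\partial^\alpha u$, $|\alpha|\leqslant m-1$, vanishes identically on $\overline{\Omega}^c$, and its $(m-|\alpha|,p)$-quasi-continuous representative $\widetilde{\partial^\alpha u}$ is therefore $(m-|\alpha|,p)$-q.e.\ zero on $\overline{\Omega}^c$ (a $(k,p)$-quasi-continuous function that is a.e.\ zero on an open set is automatically $(k,p)$-q.e.\ zero there). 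In view of the quasi-continuous characterisation of $W^{m,p}_0(\Omega)$ recalled in the introduction, it only remains to prove that $\widetilde{\partial^\alpha u}=0$ $(m-|\alpha|,p)$-q.e.\ on $\partial\Omega$ for each $|\alpha|\leqslant m-1$.

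The key tool is Maz'ya's Poincar\'e-type inequality: under the $(1,p)$-capacitary condition, for every $x\in\partial\Omega$, every $r\leqslant r_0$, and every $v\in W^{1,p}(B(x,r))$ vanishing $(1,p)$-q.e.\ on $\overline{\Omega}^c\cap B(x,r)$,
\[
\int_{B(x,r)} |v|^p\, dy \;\leqslant\; \frac{C(N,p)}{\delta_0}\, r^p \int_{B(x,r)} |\nabla v|^p\, dy.
\]
Apply this inequality iteratively: first to $\partial^\alpha u$, then to each component of $\nabla\partial^\alpha u$, and so on up to order $m-1$. At every step the function to which Maz'ya is applied is some $\partial^\beta u$ with $|\alpha|\leqslant|\beta|\leqslant m-1$, which by the first paragraph vanishes $(1,p)$-q.e.\ on $\overline{\Omega}^c\cap B(x,r)$. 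After $m-|\alpha|$ iterations one obtains
\[
\int_{B(x,r)} |\partial^\alpha u|^p\, dy \;\leqslant\; C\, r^{(m-|\alpha|)p} \int_{B(x,r)} |\nabla^m u|^p\, dy,
\]
with $C=C(m,\alpha,N,p,\delta_0)$.

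Dividing by $|B(x,r)|$ and letting $r\to 0$, the plan is to invoke a capacitary Lebesgue differentiation theorem (in the spirit of Adams--Hedberg or Bagby--Ziemer): at $(m-|\alpha|,p)$-quasi-every $x$ the left-hand side converges to $|\widetilde{\partial^\alpha u}(x)|^p$, while the average of $|\nabla^m u|^p$ stays finite (a fine-capacity Lebesgue statement for the $L^1$ function $|\nabla^m u|^p$). Since $r^{(m-|\alpha|)p}\to 0$ and $(m-|\alpha|)p\geqslant p>0$, this forces $\widetilde{\partial^\alpha u}(x)=0$ $(m-|\alpha|,p)$-q.e.\ on $\partial\Omega$, completing the proof.

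I expect the main obstacle to be this last step: propagating Lebesgue-a.e.\ convergence of the ball averages to $(m-|\alpha|,p)$-quasi-everywhere convergence, which requires careful handling of the fine-capacity theory for Bessel/Riesz potentials. A secondary but still delicate point is the verification of Maz'ya's inequality with the right constant under the hypothesis (\ref{cond}), which is expressed in terms of absolute capacities $\Ccap_{1,p}(\overline{\Omega}^c \cap B(x,r))/\Ccap_{1,p}(B(x,r))$ rather than the relative capacity of Maz'ya's classical statement; one must distinguish the regimes $p<N$, $p=N$ and $p>N$ according to the scaling of $\Ccap_{1,p}(B(x,r))$ in $r$.
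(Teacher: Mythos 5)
Your route is genuinely different from the paper's. You reduce the problem to the pointwise capacitary characterisation of $W^{m,p}_0(\Omega)$ (spectral synthesis) and then try to verify $\widetilde{\partial^\alpha u}=0$ $(m-|\alpha|,p)$-q.e.\ on $\partial\Omega$ by a capacitary Lebesgue differentiation argument. The paper never invokes that characterisation: it approximates $u$ directly by $u\theta_n$, where $\theta_n$ is a mollified cut-off of $\{d(\cdot,\partial\Omega)\geqslant 2^{-n}\}$, and shows $u\theta_n\to u$ in $W^{m,p}$ by controlling the commutator terms $|\partial^\beta u|^p|\partial^\gamma\theta_n|^p$ on a $5B$-covering of $\partial\Omega$ by balls of radius $\sim 2^{-n}$. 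The common core is identical --- the iterated Maz'ya--Poincar\'e inequality \eqref{poincare} under \eqref{cond}, applied $(m-|\alpha|)$ times to trade powers of the radius against higher gradients --- but the paper converts it into an $L^p$ estimate of the truncation error over the shrinking collar $\overline{\Omega}\setminus K_{n-1}$ (where $|\partial\Omega|=0$ is used), whereas you convert it into a pointwise decay statement at boundary points. What the paper's approach buys is that it stays elementary and self-contained: it needs only the density of $\mathscr{C}^\infty_c$ in $W^{m,p}$ of an open set containing the support, not the Hedberg--Wolff synthesis theorem; in particular it is unproblematic at $p=1$, where the synthesis characterisation you start from is delicate.

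Two points in your write-up are genuine gaps rather than routine verifications. First, the differentiation step: you need $r^{(m-|\alpha|)p-N}\int_{B(x,r)}|\nabla^m u|^p\,dy\to 0$ outside a set of $(m-|\alpha|,p)$-capacity zero, and a Lebesgue-point argument for the $L^1$ function $|\nabla^m u|^p$ only gives this outside a Lebesgue-null set, which may have positive $(m-|\alpha|,p)$-capacity. The correct (and fillable) route is: the set where $\limsup_{r\to 0}r^{s-N}\int_{B(x,r)}f\,dy>0$ for $f\in L^1$ and $s=(m-|\alpha|)p$ has vanishing $\mathcal{H}^{N-s}$-measure, and such sets are $\Ccap_{m-|\alpha|,p}$-null (for $p>1$; see Adams--Hedberg, Thm.~5.1.13); combined with the $L^p$-Lebesgue-point theorem for quasicontinuous representatives (Adams--Hedberg, Thm.~6.2.1) for the left-hand side, this closes the argument --- but as written your proof stops at announcing the plan. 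Second, you correctly flag the scaling of $\Ccap_{1,p}(B(x,r))$: the Bessel capacity behaves like $r^{N-p}$ only for $p<N$ and $r\leqslant 1$, is logarithmic at $p=N$, and is bounded below for $p>N$; the paper itself is cavalier on this point, but your proof, like the paper's, needs the normalised hypothesis \eqref{cond} to produce a lower bound $\Ccap_{1,p}(r^{-1}Z(\partial^\beta u))\geqslant c\,\delta_0$ uniformly in $r\leqslant r_0$ in each regime before \eqref{poincare} yields the constant $C\delta_0^{-1}r^p$. Neither issue invalidates the strategy, but both must be carried out for the proof to stand, and the reliance on spectral synthesis means your argument, unlike the paper's, does not cover $p=1$ without further justification.
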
\leavevmode

Let us give some comments about the result.  One of the main feature and somewhat surprising is that the condition involves only the $(1,p)-$capacity even if the conclusion yields $(m,p)-$stability for all $m\geqslant 1$. In \cite{BuZo94}, Bucur and Zolesio proved that a domain is $(1,2)-$stable under a very similar but weaker condition with $(1,2)-$capacity. More precisely, the condition in  \cite{BuZo94} is the same as ours but without a bar over $\Omega$ in the numerator (See Section~\ref{sec7} for more details).  In contrast, with the very similar and slightly stronger $(1,2)-$capacity condition \eqref{cond}, we obtain $(m,2)-$stability for all $m\geqslant 1$. It is worth mentioning that our proof is different and much simpler than the one \cite{BuZo94}, thus provides an alternative argument which is new even for the standard case $m=1$.

As a consequence of our main result we get a capacitary condition which implies stability for the polyharmonic equation along a Hausdorff converging sequence of domains. We refer to Section \ref{stability}  for the definition of Hausdorff convergence, Mosco convergence and $\gamma_m$-convergence, and we give here in the introduction two different statements.  In the first one (Corollary \ref{main22}) we assume only the limiting domain $\Omega$ to be ``regular'' while in the second (Theorem \ref{main3}) we assume the whole sequence to be ``regular''. \newline

\begin{corollary} \label{main22} Let $\Omega\in\mathscr{O}^{\delta_0,r_0}_{\textnormal{cap}}(D)$ and $(\Omega_n)_{n\in \mathbb{N}}$ be a sequence in $\mathscr{O}(D)$. If $|\partial  \Omega|=0$, $(\overline{\Omega_n})_{n\in \mathbb{N}}$ $d_H-$converges to $ \overline{\Omega}$, and $(\Omega_n)_{n\in \mathbb{N}}$ $d_{H^c}-$converges to $\Omega$,
   then the sequence $(\Omega_n)_{n\in \mathbb{N}}$ $\gamma_m-$converges to $\Omega$, or equivalently, $(H^m_0(\Omega_n))_{n\in \mathbb{N}}$ converges to $H^m_0(\Omega)$ in the sense of Mosco.
\end{corollary}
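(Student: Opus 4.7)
The plan is to verify the two Mosco convergence conditions for $H^m_0(\Omega_n)$ and $H^m_0(\Omega)$ in $H^m(\R^N)$, which by Section~\ref{stability} is equivalent to $\gamma_m$-convergence. The non-trivial step will rely on Theorem~\ref{main} applied with $p=2$.

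For the first (recovery) condition, I will approximate any $u \in H^m_0(\Omega)$ by $\varphi \in C^\infty_c(\Omega)$. Any such $\varphi$ has compact support $K \subset \Omega$, and the hypothesis $\Omega_n \to \Omega$ in $d_{H^c}$ says that $\Omega_n^c \to \Omega^c$ in Hausdorff distance, so $K \cap \Omega_n^c = \emptyset$ for $n$ large. Hence $\varphi \in C^\infty_c(\Omega_n) \subset H^m_0(\Omega_n)$ eventually, and a standard diagonal argument produces a strong recovery sequence.

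For the second (liminf) condition, assume $u_n \in H^m_0(\Omega_n) \subset H^m(\R^N)$ (extended by zero) converges weakly in $H^m(\R^N)$ to some $u$. Rellich--Kondrachov gives $u_n \to u$ strongly in $L^2_{\mathrm{loc}}(\R^N)$. I claim $u = 0$ a.e.\ on $\overline{\Omega}^c$. Indeed, cover the open set $\overline{\Omega}^c$ by countably many closed balls $\overline{B(x_i, r_i)} \subset \overline{\Omega}^c$; each such ball has positive distance to $\overline{\Omega}$, and the $d_H$-convergence of $\overline{\Omega_n}$ to $\overline{\Omega}$ then ensures $\overline{B(x_i, r_i)} \cap \overline{\Omega_n} = \emptyset$ for $n$ large. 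Since $u_n$ is supported in $\overline{\Omega_n}$, it vanishes on $B(x_i, r_i)$ eventually; the strong $L^2$-limit inherits this, and taking the countable union yields $u = 0$ a.e.\ on $\overline{\Omega}^c$. Theorem~\ref{main} (with $p=2$) then furnishes $u \in H^m_0(\Omega)$.

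The only deep step is the application of Theorem~\ref{main}, which is precisely what upgrades the pointwise-almost-everywhere vanishing on $\overline{\Omega}^c$ to actual membership in $H^m_0(\Omega)$; the rest of the Mosco verification is routine, amounting to a careful bookkeeping of which compact sets sit inside $\Omega_n$ or outside $\overline{\Omega_n}$ based on the two modes of set convergence ($d_{H^c}$ to push compactly supported test functions into $\Omega_n$, and $d_H$ of closures to annihilate the weak limit away from $\overline{\Omega}$).
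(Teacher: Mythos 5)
Your proof is correct, but it is organized differently from the paper's. The paper derives the corollary by combining Theorem \ref{main} (which gives the $(m,2)$-stability of $\Omega$ from the capacitary condition and $|\partial\Omega|=0$) with Proposition \ref{main2}, whose proof establishes the $\gamma_m$-convergence directly: one fixes $f\in L^2(D)$, extracts a weak $H^m_0(D)$-limit $v$ of the polyharmonic solutions $u_n$, identifies it as the solution on $\Omega$ by passing to the limit in the weak formulation (test functions being pushed into $\Omega_{n_k}$ via $d_{H^c}$-convergence, exactly as you do), and shows $v=0$ a.e.\ on $\overline{\Omega}^c$ by combining an a.e.-convergent subsequence with the fact that compact subsets of $\overline{\Omega}^c$ eventually avoid $\overline{\Omega_n}$; the Mosco convergence is then obtained from the equivalence of Proposition \ref{equiv1}. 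You instead verify the two Mosco conditions directly for arbitrary sequences $v_n\in H^m_0(\Omega_n)$ --- using Rellich for strong $L^2_{\mathrm{loc}}$ convergence rather than an a.e.-convergent subsequence, and a countable cover of $\overline{\Omega}^c$ by closed balls rather than general compacts --- and then invoke Proposition \ref{equiv1} in the opposite direction to recover the $\gamma_m$-convergence. The geometric content is identical in both arguments ($d_{H^c}$-convergence to push compact supports into $\Omega_n$, $d_H$-convergence of the closures to annihilate the weak limit on $\overline{\Omega}^c$, and Theorem \ref{main} to upgrade the a.e.\ vanishing to membership in $H^m_0(\Omega)$). What your route buys is that the polyharmonic equation never appears: all the PDE bookkeeping (uniform energy bounds, the energy identity giving strong convergence of the solutions) is delegated to the already-proved equivalence of Proposition \ref{equiv1}, whereas the paper's route repeats part of that bookkeeping inside the proof of Proposition \ref{main2}. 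Both are complete and correct.
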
\leavevmode

Corollary \ref{main22} follows from gathering together Proposition \ref{main2} and Theorem \ref{main}. Let us now mention a few remarks.

\begin{enumerate}
\item  The interesting feature of Corollary \ref{main22} is that only the limiting domain $\Omega$ is assumed to be stable (thus somehow ``regular'') and nothing is assumed on the sequence $(\Omega_n)_{n\in \mathbb{N}}$, which could be arbitrary open sets. 

\item It is worth mentioning that in \cite{BuZo94} the authors assumed only $\Omega_n\overset{d_{H^c}}{\longrightarrow}  \Omega$ to obtain the $\gamma_m-$convergence of a sequence $(\Omega_n)_{n\in \mathbb{N}}$. On the other hand they assumed that every term $\Omega_n$ along the sequence satisfies a capacitary condition with uniform constants. A similar statement will be given later in Theorem~\ref{main3}.

\item  It is easy to construct an example of stable domain $\Omega$  (even smooth) and a sequence $(\Omega_n)_{n\in \mathbb{N}}$ such that $\Omega_n\overset{d_{H^c}}{\longrightarrow}  \Omega$ and $(\Omega_n)_{n\in \mathbb{N}}$ does not $\gamma_m-$converges to $\Omega$. This shows that without any other assumption on the sequence, the second assumption $\overline{\Omega_n} \overset{d_H}{\longrightarrow}  \overline{\Omega}$ is pivotal for the result to hold true. The construction is rather classical : consider the sequence made from an enumeration $x_i \in B(0,1)$ of points with rational coordinates. %and chose a sequence of radii $r_i$ such that $r_i\leq {\rm dist}(x_i,\partial B(0,1))$ and $\sum_{i=0}^{+\infty}r_i^2 < \pi$.
 Then define
%$$\Omega_n =B(0,2)\setminus \bigcup_{i=1}^n \overline{B}(x_i,r_i).$$
$$\Omega_n :=B(0,2)\setminus \bigcup_{i=0}^n \{x_i\}.$$
It is easy to see that $(\Omega_n)_{n\in \mathbb{N}}$ converges to $\Omega:= B(0,2)\setminus \overline{B}(0,1)$ for the complementary Hausdorff distance, which is clearly a $(m,2)-$stable domain because the boundary is smooth. On the other hand, for dimension $N\geqslant2m$ we know that $\textnormal{Cap}_{m,2}(\{x_i\})=0$, so it is classical that $(\Omega_n)_{n\in \mathbb{N}}$ does not $\gamma_m-$converge to $\Omega$ (see \cite[Section 3.2.6, page 80]{HePi06} for the case $m=1$). On the other hand $\overline{\Omega_n}=\overline{B}(0,2)$ clearly does not Hausdorff converge to $\overline{\Omega}=\overline{B}(0,2)\setminus B(0,1)$, which explains why  Theorem \ref{main2} does not  apply.
\end{enumerate}

Next, in order to get  existence of shape optimisation problems for higher order equations under geometrical constraints, the following variant is  more usefull. Notice that here we suppose \eqref{cond} on the whole sequence and by this way we can avoid the Hausdorff convergence but only complementary Hausdorff convergence is enough.\newline

\begin{theorem} \label{main3}  Let $\Omega\in\mathscr{O}(D)$ and $(\Omega_n)_{n\in \mathbb{N}}$ all belonging   to $\mathscr{O}^{\delta_0,r_0}_\textnormal{cap}(D)$. If $\vert \partial \Omega \vert = 0 $ and 
$(\Omega_n)_{n\in \mathbb{N}}$ $d_{H^c}-$converges to $ \Omega$, then $(\Omega_n)_{n\in \mathbb{N}}$ $\gamma_m-$converges to $\Omega$, or equivalently, $(H^m_0(\Omega_n))_{n\in \mathbb{N}}$ converges to $H^m_0(\Omega)$ in the sense of Mosco.
\end{theorem}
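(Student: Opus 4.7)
My plan is to verify the equivalent Mosco convergence of $H^m_0(\Omega_n)$ to $H^m_0(\Omega)$. The recovery inequality (M1) is straightforward: any $u \in H^m_0(\Omega)$ is an $H^m$-limit of test functions $\phi \in C^\infty_c(\Omega)$, and since $\textnormal{supp}\,\phi$ is a fixed compact inside the open set $\Omega$, the $d_{H^c}$-convergence places $\textnormal{supp}\,\phi \subset \Omega_n$ for $n$ large, so $\phi \in H^m_0(\Omega_n)$ eventually; a diagonal extraction yields the strong recovery sequence.

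The liminf inequality (M2) is the heart of the argument. Take $u_n \in H^m_0(\Omega_n)$ with $u_n \rightharpoonup u$ weakly in $H^m(\R^N)$. I would proceed in two stages. First, I establish that $u = 0$ almost everywhere on $\overline{\Omega}^c$: Rellich compactness yields $u_n \to u$ in $L^p_{\textnormal{loc}}$, and for any compact $K \subset \overline{\Omega}^c$ the $d_{H^c}$-convergence forces $K \cap \Omega_n$ into an inner boundary layer $\{y \in \Omega_n : d(y, \Omega_n^c) < \varepsilon_n\}$ with $\varepsilon_n \to 0$; the uniform condition \eqref{cond} supplies a Hardy-type inequality on such thin layers for $H^m_0$-functions, so $\|u_n\|_{L^p(K)} \to 0$ and hence $u = 0$ a.e. on $K$. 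Second, I upgrade the a.e. vanishing of $u$ on $\overline{\Omega}^c$ to $u \in H^m_0(\Omega)$ by invoking Theorem \ref{main} applied to $\Omega$ itself, after proving $\Omega \in \mathscr{O}^{\delta, r_0}_{\textnormal{cap}}(D)$ for some $\delta > 0$ depending only on $\delta_0$. Given $x \in \partial \Omega$ and $r \leqslant r_0$, I select boundary points $x_n \in \partial \Omega_n$ converging to $x$ (they exist because $x \in \overline{\Omega} \cap \overline{\Omega^c}$ translates, via $d_{H^c}$-convergence, to $\Omega_n$ and $\Omega_n^c$ admitting nearby points for $n$ large), apply \eqref{cond} at $(x_n, r)$, and pass to the limit in the capacity inequality.

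The main obstacle is precisely this final passage to the limit. Since $(1,p)$-capacity is not in general lower semicontinuous under Hausdorff-type convergence, and the sets $\overline{\Omega_n}^c$ may differ significantly from $\Omega_n^c$ along $\partial \Omega_n$, a delicate semicontinuity argument is required. The hypothesis $|\partial \Omega| = 0$ rules out mass concentration on $\partial \Omega$ in the limit, while the uniform fatness provided by \eqref{cond} keeps the open exteriors $\overline{\Omega_n}^c \cap B(x_n, r)$ quantitatively thick throughout the sequence; together these should enable the extraction of a capacity inequality for $\Omega$, possibly with a slightly degraded $\delta$. Once $\Omega$ is known to satisfy such a condition, Theorem \ref{main} yields its $(m,p)$-stability, which closes (M2).
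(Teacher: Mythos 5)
Your overall architecture coincides with the paper's: a recovery sequence of test functions placed inside $\Omega_n$ by $d_{H^c}$-convergence, and, for the liminf part, first showing that the weak limit vanishes a.e.\ on $\overline{\Omega}^c$ by exploiting the uniform capacitary condition along the sequence, then concluding by the $(m,2)$-stability of the limit domain via Theorem \ref{main}. Your first stage is exactly the paper's mechanism: the paper truncates $u_n$ by the indicator of $A_{n,\varepsilon}=\{x\in\Omega_n \mid d(x,\Omega_n^c)\geqslant 10\varepsilon\}$, covers $\Omega_n\setminus A_{n,\varepsilon}$ by balls $B(x_i,100\varepsilon)$ centered on $\partial\Omega_n$ via the \emph{5B}-covering lemma, and iterates the capacitary Poincar\'e inequality \eqref{poincare} $m$ times on each ball to get $\int_{\Omega_n\setminus A_{n,\varepsilon}}\vert u_n\vert^2\leqslant C\varepsilon^{2m}\Vert\nabla^m u_n\Vert^2_{L^2}$. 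This is precisely the ``Hardy-type inequality on thin layers'' you invoke, so that part of your plan is sound, although you should actually supply the covering-plus-iteration argument rather than cite the inequality as known.

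The genuine gap is in your second stage. You correctly see that you need $\Omega$ itself to satisfy a capacitary condition so that Theorem \ref{main} applies, and you correctly flag that passing \eqref{cond} to the $d_{H^c}$-limit is the obstacle, but you do not close it: ``these should enable the extraction of a capacity inequality'' is not an argument. The difficulty is real: $(1,p)$-capacity is not lower semicontinuous along Hausdorff-converging sets, and $d_{H^c}$-convergence controls the closed sets $\Omega_n^c$ but says nothing directly about the open sets $\overline{\Omega_n}^c$ appearing in the numerator of \eqref{cond}; neither $\vert\partial\Omega\vert=0$ nor the uniform fatness of the $\overline{\Omega_n}^c$ by itself yields the limit inequality. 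The paper does not attempt a semicontinuity argument at all: it relies on the claim that the class $\mathscr{O}^{\delta_0,r_0}_{\textnormal{cap}}(D)$ is closed under $d_{H^c}$-convergence (attributed to Section \ref{sec7} and, for $m=1$, to Bucur--Zolesio), so that $\Omega$ inherits the condition with the same constants and Theorem \ref{main} applies directly. To complete your proof you must either establish this closedness or read the hypothesis of the theorem as asserting outright that $\Omega\in\mathscr{O}^{\delta_0,r_0}_{\textnormal{cap}}(D)$; as written, this step is missing.
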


\vspace{0.5cm}

Since complementary Hausdorff topology is relatively compact, it is easy to get existence results for shape optimisation problems using Theorem \ref{main3}, with additional geometrical constraints on the domain. This applies to various standard classes of domains such as uniformly Lipschitz domains, Reifenberg-flat, corkscrew, or $\varepsilon-$cone, as described in the last section of the paper (see Theorem \ref{shapeopt}).

%%%%%%%%%%%%%%%%%%%%%%%%%%

\section{Preliminaries}

The term domain and the symbol $\Omega$ will be reserved for an open and bounded set in the $N-$dimensional euclidean space $\mathbb{R}^N$. The norm of a point $x\in \mathbb{R}^N$ is denoted by $\vert x \vert := (\sum_{i=1}^Nx_i^2)^{1/2}$. If $\alpha$ is a multi-indice, i.e. $\alpha \in \mathbb{N}^N$, then the norm of $\alpha$ is $\vert \alpha \vert := \sum_{i=1}^N\alpha_i$ and we define the partial derivative operator
\begin{align*}
    \partial^\alpha := \frac{\partial^{\vert \alpha\vert}}{\partial_1^{\alpha_1} \cdots \partial^{\alpha_k}_N},
\end{align*}
and the vector $\nabla^k:=(\partial^\alpha )_{\vert\alpha \vert = k}$.
The notations $\partial \Omega$ and $\overline{\Omega}$ stand for
the boundary and the closure of $\Omega$, respectively. Let $\mathscr{C}^\infty_c(\Omega)$ be the space of smooth functions with compact support in $\Omega$. The ball of radius $r\geqslant 0$ and centered at $x\in \mathbb{R}^N$ is denoted by $B(x,r)$. For $m\in \mathbb{N}$ and $p \in [1, +\infty[$, we consider the usual Sobolev space $W^{m,p}(\Omega)$ endowed with the norm
$$ \Vert u \Vert_{W^{m,p}(\Omega)}:= \left( \sum_{k=0}^m \Vert \nabla^k u\Vert_{L^p(\Omega)}^p \right)^{1/p},$$
where
$$ \Vert \nabla^k u\Vert_{L^p(\Omega)}^p := \int_\Omega \vert \nabla^k u \vert^p \;dx.$$
Finally, the space $W_0^{m,p}(\Omega)$ is the completion of $\mathscr{C}^\infty_c(\Omega)$ with respect to the norm \newline$\Vert \cdot \Vert_{W^{m,p}(\Omega)}$. 

When the dimension $N < mp$, elements of $W^{m,p}(\R^N)$ can be represented as continuous functions. However, if $N \geqslant mp$, this is no longer the case and the natural way of measuring by how much the functions deviate from continuity is by means of capacity. If $K \subset \R^N$ is a compact, then we define the $(m,p)-$capacity of $K$ by
$$ \textnormal{Cap}_{m,p}(K):= \inf\left\{ \Vert \varphi \Vert^p_{W^{m,p}} \; \middle| \; \varphi \in \mathscr{C}^\infty_c(\R^N) \textnormal{ such that }\varphi \geqslant 1 \textnormal{ on } K \right\}.$$
Afterwards, for an open set $\Omega \subseteq \R^N$ we can consider
$$ \textnormal{Cap}_{m,p}(\Omega):= \sup\left\{ \textnormal{Cap}_{m,p}(K) \; \middle| \; K \subseteq \Omega \textnormal{ is a compact} \right\}.$$
Finally, if $E \subseteq \R^N$ is an abritrary set, then we define
$$ \textnormal{Cap}_{m,p}(E):= \inf\left\{ \textnormal{Cap}_{m,p}(\Omega) \; \middle| \; \Omega \supseteq E \textnormal{ is a open set} \right\}.$$
When an assertion is true except for a set of $(m,p)-$capacity equal to zero, we say that it is true $(m,p)-$quasi everywhere and denote by $(m,p)-$q.e.. Let $\rho \in \mathscr{C}^\infty_c(B(0,1))$ be a test function and consider an approximate identity $(\rho_n)_{n\in \mathbb{N}}$ by $\rho_n(x):= n^N\rho(nx)$. For $(m,p)-$q.e. $x \in \mathbb{R}^N$, $\lim  \rho_n * u(x) := \Tilde{u}(x)$ exists and $u(x)=\Tilde{u}(x)$ almost everywhere. Moreover, for all $\varepsilon>0$ there is an open set $\Omega_\varepsilon\subset \mathbb{R}^N$ such that $\textnormal{Cap}_{m,p}(\Omega_\varepsilon)< \varepsilon$ and, for a subsequence, $\rho_n * u$ converges uniformly to $\Tilde{u}$ on $\mathbb{R}^N \backslash \Omega_\varepsilon$. In particular, the function $\Tilde{u}$ is continuous on $\mathbb{R}^N \backslash \Omega_\varepsilon$ and we call it the $(m,p)-$quasicontinuous representative of $u$. In the present paper, functions $u$ in $W^{m,p}(\mathbb{R}^N)$ will be assumed to be defined pointwise $(m,p)-$q.e. and to be $(m,p)-$quasicontinuous (see \cite{AdHe96}).

%\noindent  {\color{blue} $\bullet$ A faire: voir Ã  l'aide d'un contre-exemple que la condition $|\overline{\Omega} \setminus \Omega|=0$ est dans tous les cas nÃ©cessaire... (je ne suis pas sÃ»r que Ã§a soit si facile mais j'y crois) }

The proof of the main result will use  the following Poincar\'e type inequality that can be found for instance in \cite[Corollary 4.5.2, page 195]{Zi89}. To be more precise, we can apply the inequality in \cite{Zi89} to $B(0,1)$ and use the fact that $\textnormal{Cap}_{1,p}$ is homogeneous of degree $N-p$ (see \cite[Theorem 2, page 151]{EvGa92}), then by a simple change of variable apply it to the function $x\longmapsto u(Rx)$, we get the following one.\newline 

\begin{lemma}  Let $r>0$, and $u \in W^{1,p}(B(0,r))$. We define $Z(u):=\{ x \in \overline{B}(x_0,r) \; |\; u(x)=0\}$.
 If $\textnormal{Cap}_{1,p}(Z(u))>0$, then
 \begin{eqnarray}
\int_{B(0,r)} \vert u\vert^p \;dx \leqslant C \frac{r^p}{{\rm Cap}_{1,p}(r^{-1}Z(u))}  \int_{B(0,r)} |\nabla u|^p \;dx, \label{poincare}
\end{eqnarray}
where $C>0$ depends only on $p$ and $N$.
\end{lemma}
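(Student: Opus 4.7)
The plan is to obtain this weighted Poincar\'e inequality as a direct rescaling of Ziemer's estimate on the unit ball, exactly as suggested in the paragraph preceding the statement. I take as starting point the version of Ziemer's Corollary 4.5.2 on $B(0,1)$: for any $v\in W^{1,p}(B(0,1))$ whose zero set $Z(v)=\{x\in\overline{B}(0,1):v(x)=0\}$ has positive $(1,p)$-capacity,
$$\int_{B(0,1)}|v|^p\,dx\;\leqslant\;\frac{C}{\textnormal{Cap}_{1,p}(Z(v))}\int_{B(0,1)}|\nabla v|^p\,dx,$$
with $C=C(N,p)$.

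Next, given $u\in W^{1,p}(B(0,r))$, I would introduce the rescaled function $v(x):=u(rx)$ on $B(0,1)$. A standard change of variables yields
$$\int_{B(0,1)}|v|^p\,dx \;=\; r^{-N}\int_{B(0,r)}|u|^p\,dy, \qquad \int_{B(0,1)}|\nabla v|^p\,dx \;=\; r^{p-N}\int_{B(0,r)}|\nabla u|^p\,dy,$$
while the zero sets are related by $Z(v)=r^{-1}Z(u)$. Substituting into the unit-ball inequality and multiplying through by $r^N$ gives precisely the claimed bound.

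I do not foresee any real obstacle: the argument is pure bookkeeping of the scaling. The only subtlety worth noting is that the statement keeps the $(1,p)$-capacity of the rescaled set $r^{-1}Z(u)$ rather than invoking the homogeneity $\textnormal{Cap}_{1,p}(r^{-1}Z(u))=r^{-(N-p)}\,\textnormal{Cap}_{1,p}(Z(u))$ of \cite[Theorem 2, p.~151]{EvGa92}; this is just a stylistic choice, presumably convenient for later applications where $r$ will be taken comparable to a local scale so that the ratio on the right-hand side is controlled by the capacitary density condition \eqref{cond}.
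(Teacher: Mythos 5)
Your proof is correct and follows exactly the route the paper itself indicates: apply Ziemer's Corollary 4.5.2 on $B(0,1)$ and transfer it to $B(0,r)$ via the change of variable $v(x)=u(rx)$, keeping the capacity of the rescaled zero set $r^{-1}Z(u)$ in the denominator. The scaling computations for the $L^p$ norms, the gradient, and the zero set are all accurate, so nothing further is needed.
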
\leavevmode

\section{Proof of Theorem \ref{main}}

\begin{proof}[Proof of Theorem \ref{main}] Let $\Omega$ be a bounded domain satisfying the assumptions of Theorem~\ref{main} and let $u\in W^{m,p}(\R^N)$ be given satisfying $u=0$ almost everywhere in $\overline{\Omega}^c$. To prove the theorem it suffice to prove that $u$ can be approximated in the $W^{m,p}(\R^N)$ norm by a sequence of functions in $\mathscr{C}^\infty_c(\Omega)$. To do so we will first truncate $u$ near the boundary of $\Omega$ as follows. For all $n \in \mathbb{N}$, we consider 
$$ K_n := \left\{ x \in \Omega \; \middle| \; d(x,\partial \Omega)\geqslant 2^{-n} \right\}.$$
The exhaustive family of compact $(K_n)_{n\in \mathbb{N}}$ satisfies $K_{n} \subseteq K_{n+1}$ and $\Omega = \bigcup_{n\in \mathbb{N}}K_n$.
Then take a test function $\rho \in \mathscr{C}^\infty_c(B(0,1))$ such that $\rho \geqslant 0$ and
$$ \int_{\R^N}\rho(x)dx = 1.$$
We define $\rho_\varepsilon(x):=\varepsilon^{-N} \rho(x/\varepsilon)$ and 
$$
\theta_{n,\varepsilon}(x) := {\bf 1}_{K_n}*\rho_{\varepsilon}(x) =\varepsilon^{-N} \int_{K_n}\rho\left(\frac{x-y}{\varepsilon}\right)dy,
$$
which satisfies $\textnormal{Supp }(\theta_{n,\varepsilon}) \subseteq K_n+ \overline{B}(0,\varepsilon)$.  We take $\varepsilon_n :=2^{n+1}$ and denote now $\theta_{n}:=\theta_{n,\varepsilon_n}$ in such a way that  $\theta_n\in \mathscr{C}^\infty_c(\Omega)$, $\theta_{n}=1$ on $K_{n-1}$, $\theta_{n} = 0$ on $K_{n+1}^c$, 
$$\textnormal{Supp } (\nabla^{k} \theta_n) \subseteq K_{n+1}\setminus \textnormal{Int}(K_{n-1}).$$
To prove the theorem it suffice to prove that 
$$u_n:=u \theta_n\xrightarrow[n\longrightarrow +\infty]{}u \text{ in }W^{m,p}(\R^N),$$
because then we can conclude  by using  the density of $\mathscr{C}^\infty_c(\Omega)$ into $W^{m,p}(\textnormal{Int}(K_{n+2}))$, and a diagonal argument.
Let $k\leqslant m$ be a positive integer. To prove the claim we first estimate the $L^p$ norm :
$$\|u_n-u\|_{L^p(\R^N)}^p \leqslant \int_{\overline{\Omega} \backslash K_{n-1}}\vert u \vert^p\;dx.$$
Using the fact that $(\Omega \backslash K_n)_{n\in \mathbb{N}}$ is a decreasing sequence of Lebesgue measurable sets, and thanks to the condition $|\overline{\Omega} \setminus \Omega |=0$, we know that $|\overline{\Omega} \backslash K_n|\longrightarrow 0$ as $n\longrightarrow +\infty$ and therefore $u_n \longrightarrow u$ in $L^p(\R^N)$. Next for the norm of gradients we will use a covering of  $\partial \Omega$. More precisely, the infinite family $(B(x,2^{-(n-2)}))_{x\in \partial\Omega}$ is a cover of $\textnormal{Supp }(\nabla^k \theta_n)$ and by the famous \emph{5B}-covering lemma (see for instance \cite[Theorem 2.2.3]{at}) there exists a countably subcover indexed by $(x_i)_{i\in \mathbb{N}} \subseteq \partial \Omega$ such that $(B(x_i,2^{-(n-2)}))_{i\in \mathbb{N}}$ is a disjoint family,
$$ \textnormal{Supp }(\nabla^k\theta_n) \subseteq \bigcup_{i\in \mathbb{N}} B(x_i, 5\cdot 2^{-(n-2)}), \textnormal{ and } \sum_{i\in \mathbb{N}} \boldsymbol{1}_{B(x_i,5\cdot2^{-(n-2)})} \leqslant N_0,$$
for a universal constant $N_0 \in \mathbb{N}$. In the sequel, we simply write $B_n(x_i)$ instead of $B(x_i,5\cdot2^{-(n-2)})$. Afterwards, we estimate
\begin{align*}
\|\nabla^k u_n-\nabla^k u\|_{L^p(\mathbb{R}^N)}^p &\leqslant  C\int_{\overline{\Omega} \backslash K_{n-1}}\vert \nabla^k u \vert^p\;dx + C\sum_{\substack{k = |\beta| + |\gamma| \\ \gamma \ne 0}}\int_{\overline{\Omega} \backslash K_{n-1}} |\partial^\beta u|^p \vert \partial^\gamma\theta_n \vert^p\; dx.
\end{align*}
The first term tends to $0$ as $n \longrightarrow +\infty$ for the same reasons as before. For the other term we use the following estimate
$$
\vert \partial^\gamma \theta_n(x)\vert^p \leqslant  \varepsilon_n^{-pN} \int_{K_n}\varepsilon_n^{-p\vert \gamma\vert}\left\vert\partial^\gamma\rho\left(\frac{x-y}{\varepsilon_n}\right)\right\vert^pdy \leqslant C\varepsilon_n^{-p\vert \gamma\vert}.
$$
The function $u$ vanishes almost everywhere on the open set $\overline{\Omega}^c$, so $\partial^\beta u$ is zero in $\mathscr{D}'(\overline{\Omega}^c)$ and vanishes almost everywhere on this open set.  Hence  the Poincar\'e inequality \eqref{poincare} applies to all  the $\partial^{\beta}u$ for $|\beta|<m$, and for all ball $B_n(x_i)$ such that $2^{-(n-2)} \leqslant r_0$, thanks to our capacitary condition \eqref{cond} we get
$$ {\rm Cap}_{1,p}(x_i+5^{-1}\cdot2^{n-2}(Z(\partial^\beta u)-x_i)) \geqslant C\varepsilon_n^{-(N-p)} {\rm Cap}_{1,p}(\overline{\Omega}^c \cap B(x_i,5\cdot2^{2-n})) \geqslant C\delta_0.$$
Therefore,
\begin{eqnarray}
\int_{B_n(x_i)} |\partial^\beta u|^p\; dx \leqslant  C \delta_0^{-1}\varepsilon_n^p \int_{B_n(x_i)} |\nabla \partial^\beta u|^p\; dx,
\end{eqnarray}
and using successively $(k- \vert \beta\vert)-$times the Poincar\'e inequality and the covering of $\partial\Omega$, we get
\begin{align*}
    \int_{\overline{\Omega} \backslash K_{n-1}} |\partial^\beta u|^p \vert \partial^\gamma \theta_n \vert^p\; dx &\leqslant C\varepsilon_n^{-p\vert \gamma \vert}\int_{\overline{\Omega} \backslash K_{n-1}} |\partial^\beta u|^p\; dx \\
    &\leqslant C\varepsilon_n^{-p\vert \gamma \vert} \sum_{i\in \mathbb{N}}\int_{B_n(x_i)} |\partial^\beta u|^p\; dx \\
    &\leqslant C \sum_{i\in \mathbb{N}}\int_{B_n(x_i)} |\nabla^k u|^p\; dx \\
    &\leqslant CN_0\int_{\overline{\Omega} \backslash K_{n-5}} |\nabla^k u|^p\; dx 
\end{align*}
and this tends to zero as $n \longrightarrow +\infty$ so follows the proof.
\end{proof}

%%%%%%%%%%%%%%%%%%%%%%%%%%%%%%%%%%%%

\section{Examples of domains satisfying our condition}

As we said in the introduction, any smooth enough domain will satisfy our condition. For instance domains satisfying an external corkscrew condition as defined below.\newline

\begin{definition}\label{corkscrew} Let $\Omega\subset \R^N$ be an open and bounded set, $a\in (0,1)$, and $r_0>0$. We say that $\Omega$ satisfies an $(a,r_0)-$external corkscrew condition if for every $x \in \partial \Omega$ and $r\leqslant r_0$, one can find a ball $B(y, ar )$ such that 
$$B(y,ar) \subset B(x,r) \cap \overline{\Omega}^c.$$
\end{definition}\leavevmode

We give a non-exhaustive list of class of domains includes in $\mathscr{O}(D)$ :\newline

\begin{itemize}
    \item $\mathscr{O}_{\textnormal{convex}}(D):=\left\{ \Omega \subseteq D \; \middle| \; \Omega \textnormal{ open and convex} \right\}$.
      \item $\mathscr{O}_{\textnormal{seg}}^{r_0}(D):=\left\{ \Omega \subseteq D \; \middle| \; \Omega \textnormal{ open and has the } r_0\textnormal{-external segment property} \right\}$, we say $\Omega$ has the $r_0$-external segment property if for every $x\in \partial\Omega$, there exists a vector $y_x \in \mathbb{S}^{N-1}(0,r_0)$ such that $x+ty_x \in \Omega^c$ for $t\in(0,1)$. This notion can also be generalized by the ``flat cone'' condition as in \cite[Definition 5.2]{BuZo94} (see also \cite{bz2}).
          \item $\mathscr{O}_{\textnormal{Lip}}^\lambda(D):=\left\{ \Omega \subseteq D \; \middle| \; \Omega \textnormal{ open and is a Lipschitz domain} \right\}$.
       \item $\mathscr{O}_{\textnormal{Reif flat}}^{\delta_0, r_0}(D):=\left\{ \Omega \subseteq D \; \middle| \; \Omega \textnormal{ open and is }(\varepsilon_0, \delta_0)-\textnormal{Reifenberg flat} \right\}$, we say $\Omega$ is $(\varepsilon_0, \delta_0)-\textnormal{Reifenberg flat}$ for $\varepsilon_0 \in(0,1/2)$ and $\delta_0 \in(0,1)$ if for all $x\in \partial \Omega$ and $ \delta \in (0, \delta_0]$, there exists an hyperplan $\mathscr{P}_x(\delta)$ of $\R^N$ such that $x\in \mathscr{P}_x(\delta)$ and 
    \begin{align*}
        d_H\left(\partial \Omega \cap \overline{B}(x,\delta), \mathscr{P}_x(\delta) \cap \overline{B}(x,\delta) \right) \leqslant \delta\varepsilon_0.
    \end{align*}
    Moreover for all $x\in \partial \Omega$, the set
    \begin{align*}
        B(x,\delta_0) \cap \left\{ x \in \R^N \; \middle| \; d(x, \mathscr{P}_x(\delta_0))\geqslant2\delta_0 \varepsilon_0\right\}
    \end{align*}
    have two connex components ; one is include in $\Omega$, the other one in $\R^N \backslash \Omega$.
      \item $\mathscr{O}^{\varepsilon}_{\textnormal{cone}}(D):=\left\{ \Omega \subseteq D \; \middle| \; \Omega \textnormal{ open and has the external }\varepsilon-\textnormal{cone condition} \right\}$, we say $\Omega$ has the external $\varepsilon-$cone condition if there exists a cone $C$ of angle $\varepsilon$ such that for every $x\in \partial \Omega$, there exists a cone $C_x$ congruent to $C$ by rigid motion and such that $x$ is the vertex of $C_x$ and $C_x \subset \Omega^c$.
    \item $\mathscr{O}^{a,r_0}_{\textnormal{corks}}(D):=\left\{ \Omega \subseteq D \; \middle| \; \Omega \textnormal{ open and has the } (a,r_0)-\textnormal{external corkscrew condition} \right\}$, see definition \ref{corkscrew}.
    \item $\mathscr{O}^{\delta_0,r_0}_\textnormal{cap}(D):=\left\{ \Omega \subseteq D \; \middle| \; \Omega \textnormal{ open and has the }(\delta_0,r_0)-\textnormal{capacity condition \ref{cond}} \right\}$.
\end{itemize}
$$ $$
It is easy to see that for some fixed parameters we have the inclusions $$\mathscr{O}^{\varepsilon}_{\textnormal{cone}}(D)\subseteq \mathscr{O}^{a,r_0}_{\textnormal{corks}}(D),$$
and
$$\mathscr{O}_{\textnormal{convex}}(D) \subseteq\mathscr{O}_{\textnormal{Lip}}^\lambda(D)\subseteq\mathscr{O}_{\textnormal{Reif flat}}^{ \delta_0,r_0}(D)\subseteq \mathscr{O}_{\textnormal{corks}}^{a,r_1}(D)\subseteq\mathscr{O}^{\delta_1,r_2}_\textnormal{cap}(D).$$
A segment is a locally Lipschitz manifold of dimension $1$, the properties of $(1,p)-$capacity implies that in dimension $N<p+1$ we have $$\mathscr{O}_{\textnormal{seg}}^{r_0}(D) \subseteq \mathscr{O}^{\delta_0,r_1}_\textnormal{cap}(D).$$

Any $C^1$ domain or Lipschitz domain satisfies an external corkscrew condition. It also follows from porosity estimates that the crokscrew condition implies $|\overline{\Omega} \setminus \Omega|=0$, as stated in the following useful proposition.\\

\begin{proposition}\label{measureboundary}
    If $\Omega \in \mathscr{O}_{\textnormal{corks}}^{a,r_0}(D)$, then $\vert \partial \Omega \vert =0$.
\end{proposition}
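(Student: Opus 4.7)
My plan is to deduce the measure-zero property of $\partial\Omega$ from the fact that the external corkscrew condition forces $\partial\Omega$ to be \emph{porous}, and then apply the Lebesgue density theorem.

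The first step is to rephrase the corkscrew hypothesis in terms of $\partial\Omega$ itself. Since $\overline{\Omega}^c$ is open and disjoint from $\overline{\Omega}$, it is in particular disjoint from $\partial\Omega$. Hence, for every $x\in \partial\Omega$ and every $r\leqslant r_0$, the ball $B(y,ar)$ provided by the corkscrew condition lies inside $B(x,r)\setminus \partial\Omega$. This immediately yields the volume estimate
$$|B(x,r)\cap \partial\Omega| \leqslant |B(x,r)| - |B(y,ar)| = (1-a^N)\,|B(x,r)|.$$

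The second step is to argue by contradiction. Suppose $|\partial\Omega|>0$. By the Lebesgue density theorem, almost every $x\in \partial\Omega$ is a point of density one for $\partial\Omega$, meaning
$$\lim_{r\to 0^+}\frac{|B(x,r)\cap \partial\Omega|}{|B(x,r)|}=1.$$
Pick any such $x$; it lies in $\partial\Omega$, so for all $r\leqslant r_0$ the estimate above applies and gives a ratio $\leqslant 1-a^N<1$. This contradicts the density-one conclusion, so $|\partial\Omega|=0$.

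There is no real obstacle: the only point that deserves care is the observation that the corkscrew ball, which a priori lies in $\overline{\Omega}^c$, automatically avoids $\partial\Omega$, so that the estimate can be phrased directly for $\partial\Omega$ and combined with Lebesgue differentiation. Everything else is a one-line application of standard measure-theoretic tools, and the constant $1-a^N$ depends only on $a$ and $N$, uniformly in $x$ and $r\leqslant r_0$.
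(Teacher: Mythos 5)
Your proof is correct and rests on exactly the same observation as the paper's: the corkscrew ball $B(y,ar)\subset B(x,r)\cap\overline{\Omega}^c$ is disjoint from $\partial\Omega$, i.e.\ $\partial\Omega$ is porous. The only difference is that the paper then cites the literature for the fact that porous sets are Lebesgue-null, whereas you prove that fact directly via the uniform density deficit $|B(x,r)\cap\partial\Omega|\leqslant(1-a^N)|B(x,r)|$ and the Lebesgue density theorem; this makes your argument self-contained at essentially no extra cost, and it is the standard proof of the cited result anyway.
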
\leavevmode

 \begin{proof}For all $x\in \partial \Omega$ and $r\leqslant r_0$, there exists $y\in\R^N$ such that $$B(y,ar)\subset B(x,r) \cap \overline{\Omega}^c \subset\R^N\backslash\partial \Omega.$$
 In other words, $\partial \Omega$ is a $\sigma-$porus set in $\R^N$, in the sense of \cite[Definition 2.22]{z}, with $\sigma=2a$.   In virtue of \cite{z} (see the last paragraph at the bottom of page 321 in \cite{z}, or see also \cite[Proposition 3.5]{preiss2}), we conclude $\vert \partial \Omega \vert =0$. 
 \end{proof}\leavevmode

\begin{corollary}If $\Omega$ lies in one of the following classes :   $\mathscr{O}^{\varepsilon}_{\textnormal{cone}}(D)$, $\mathscr{O}_{\textnormal{convex}}(D)$, $\mathscr{O}_{\textnormal{Lip}}^\lambda(D)$, $\mathscr{O}_{\textnormal{Reif flat}}^{ \delta_0,r_0}(D)$, or $\mathscr{O}_{\textnormal{corks}}^{a,r_0}(D)$, then $\Omega$ is $(m,p)-$stable for any $m\geqslant 1$ and $1\leqslant p <+\infty$.
\end{corollary}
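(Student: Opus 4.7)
The plan is to reduce the corollary entirely to Theorem~\ref{main}. By Theorem~\ref{main}, it suffices to verify two things for any $\Omega$ in one of the listed classes: (a) $|\partial \Omega|=0$, and (b) $\Omega \in \mathscr{O}^{\delta_0, r_0}_{\textnormal{cap}}(D)$ for some parameters $\delta_0, r_0 > 0$. Both will be obtained by going through the corkscrew class, using the chain of inclusions already asserted in the text above the corollary:
\begin{align*}
\mathscr{O}^{\varepsilon}_{\textnormal{cone}}(D) &\subseteq \mathscr{O}^{a,r_0}_{\textnormal{corks}}(D), \\
\mathscr{O}_{\textnormal{convex}}(D) \subseteq \mathscr{O}_{\textnormal{Lip}}^\lambda(D) \subseteq \mathscr{O}_{\textnormal{Reif flat}}^{\delta_0,r_0}(D) &\subseteq \mathscr{O}_{\textnormal{corks}}^{a,r_1}(D) \subseteq \mathscr{O}^{\delta_1,r_2}_{\textnormal{cap}}(D).
\end{align*}

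For (a), the five classes all embed into $\mathscr{O}_{\textnormal{corks}}^{a,r_0}(D)$ (for suitable parameters), so Proposition~\ref{measureboundary} directly provides $|\partial \Omega|=0$. Thus (a) requires no further work beyond invoking the proposition.

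For (b), the only nontrivial inclusion in the chain is that corkscrew implies the capacity condition; I would prove it directly. Fix $\Omega \in \mathscr{O}_{\textnormal{corks}}^{a,r_0}(D)$, a point $x\in \partial \Omega$ and a radius $r\leqslant r_0$. By assumption there is a ball $B(y,ar) \subseteq B(x,r)\cap \overline{\Omega}^c$, so by monotonicity of $\textnormal{Cap}_{1,p}$,
\begin{equation*}
\textnormal{Cap}_{1,p}(\overline{\Omega}^c \cap B(x,r)) \geqslant \textnormal{Cap}_{1,p}(B(y,ar)).
\end{equation*}
Since $\textnormal{Cap}_{1,p}$ is translation-invariant and homogeneous (under rescaling of degree $N-p$ when $p<N$, with the analogous scaling law for $p=N$ and $p>N$ once one uses that both capacities in the quotient are evaluated at balls of comparable radii), the ratio is bounded below by a constant $\delta_0=\delta_0(a,N,p)>0$ depending only on the corkscrew constant $a$, the dimension $N$, and $p$. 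This yields $\Omega \in \mathscr{O}_{\textnormal{cap}}^{\delta_0, r_0}(D)$ and completes (b).

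With (a) and (b) in hand, Theorem~\ref{main} applies and gives $(m,p)$-stability for every $m\geqslant 1$ and every $1\leqslant p<+\infty$. The main technical point is really just the capacity comparison on balls for all regimes of $p$ versus $N$; this is the only place where one has to look at the explicit behavior of $\textnormal{Cap}_{1,p}$ (as referenced via \cite{EvGa92} in the proof of the Poincar\'e inequality), and the scaling invariance ensures the constant $\delta_0$ is uniform in $r\leqslant r_0$.
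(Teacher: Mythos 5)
Your proposal is correct and follows exactly the route the paper intends: the paper states the corollary without a separate proof, relying on the displayed chain of inclusions into $\mathscr{O}^{a,r_0}_{\textnormal{corks}}(D)$ and then into $\mathscr{O}^{\delta_1,r_2}_{\textnormal{cap}}(D)$, together with Proposition~\ref{measureboundary} for $\vert\partial\Omega\vert=0$ and Theorem~\ref{main}. Your only addition is to actually verify the corkscrew-to-capacity inclusion (monotonicity of $\textnormal{Cap}_{1,p}$ plus translation invariance and the scaling of the capacity of balls, uniformly for $r\leqslant r_0$ in all regimes of $p$ versus $N$), which the paper leaves as ``easy to see''; this is a correct and welcome filling-in of the same argument.
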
\leavevmode

%%%%%%%%%%%%%%%%%%%%%%%%%%%%%%%%%%%

\section{Stability with respect to domain perturbation}
\label{stability}

As before, we consider a fixed bounded domain $D\subset \R^N$. Let $\Omega$ and $(\Omega_n)_{n\in \mathbb{N}}$ be bounded subdomains of $D$ such that $\overline{\Omega_n} \longrightarrow \overline{\Omega}$ and $\overline{D} \backslash \Omega \longrightarrow \overline{D} \backslash \Omega $ as $n \longrightarrow +\infty$ for the Hausdorff convergence. In particular, this implies the compact convergence of the sequence. In this section we verify that the $(m,2)-$stability of $\Omega$ implies the Mosco convergence of the sequence $(H^m_0(\Omega_n))_{n\in \mathbb{N}}$ towards $H^m_0(\Omega)$. This will follows from the same argument as for the classical case of $H^1_0$, but for the sake of completeness we give here the full details. For this purpose, we first prove the equivalence between $\gamma_m-$convergence and Mosco convergence (Proposition \ref{equiv1}). Then we show that $(\Omega_n)_{n\in \mathbb{N}}$ $\gamma_m-$converges to $\Omega$ according to $(m,2)-$stability of $\Omega$ (Proposition~\ref{main2}).  \newline

\begin{definition}\label{gammam}
    The sequence $(\Omega_n)_{n\in\mathbb{N}}$ $\gamma_m-$converges to $\Omega$ if for all $f \in L^2(D)$, the sequence $(u_n)_{n\in \mathbb{N}}$ strongly converges in $H^m_0(D)$ to $u$, where $u_n$ (\textit{resp.} $u$) is the unique solution of the Dirichlet problem $(-\Delta)^mu_n =f$ (\textit{resp.} $(-\Delta)^mu=f$) in $H^m_0(\Omega_n)$ (\textit{resp.} $H^m_0(\Omega)$).
\end{definition}\leavevmode

\begin{definition} The sequence $(H^m_0(\Omega_n))_{n \in \mathbb{N}}$ converges to $H^m_0(\Omega)$ in the sense of Mosco if the following holds :
   \begin{enumerate}
       \item If $(v_{n_k})_{k\in\mathbb{N}}$ is a subsequence, where $v_{n_k}\in H^m_0(\Omega_{n_k})$, and weakly converges to $v\in H^m_0(D)$, then $v\in H^m_0(\Omega)$.
       \item For all $v\in H^m_0(\Omega)$, there exists a sequence $(v_n)_{n\in\mathbb{N}}$, where $v_n \in H^m_0(\Omega_n)$, which strongly converges to $v$ in $H^m_0(D)$.
   \end{enumerate}
\end{definition}\leavevmode

\begin{proposition}\label{equiv1}
    The sequence $(\Omega_n)_{n\in\mathbb{N}}$ $\gamma_m$-converges to $\Omega$ if, and only if, $(H^m_0(\Omega_n))_{n\in \mathbb{N}}$ converges to $H^m_0(\Omega)$ in the sense of Mosco.
\end{proposition}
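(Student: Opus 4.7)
The plan is to prove both implications of Proposition~\ref{equiv1} by endowing $H^m_0(D)$ with an inner product $(\cdot,\cdot)$ associated to the polyharmonic operator (equivalent to the usual norm thanks to the Poincar\'e inequality on the bounded set $D$), so that the closed subspaces $H^m_0(\Omega_n)$ and $H^m_0(\Omega)$ carry orthogonal projections $P_n$ and $P$. The weak formulation of the Dirichlet problem in $H^m_0(\Omega_n)$ reads $(u_n, \varphi) = \int_D f\varphi$ for all $\varphi \in H^m_0(\Omega_n)$, which is exactly $u_n = P_n w$, where $w \in H^m_0(D)$ is the Riesz representative of $f \in L^2(D) \hookrightarrow H^{-m}(D)$; likewise $u = Pw$. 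Both implications will be read off from this projection picture.

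For Mosco $\Rightarrow \gamma_m$, fix $f \in L^2(D)$ and note that $u_n$ minimises $J(v) = \frac{1}{2}(v,v) - \int_D fv$ over $H^m_0(\Omega_n)$; testing the weak formulation against $u_n$ yields a uniform $H^m$-bound. Any weak subsequential limit $u^\star$ lies in $H^m_0(\Omega)$ by (M1); using (M2) to approximate an arbitrary $\varphi \in H^m_0(\Omega)$ by $\varphi_n \in H^m_0(\Omega_n)$ strongly, one passes to the limit in $(u_n, \varphi_n) = \int_D f\varphi_n$ and identifies $u^\star = u$, so the whole sequence converges weakly by uniqueness. To upgrade to strong convergence, apply (M2) to $u$ itself and pick $\widetilde u_n \in H^m_0(\Omega_n)$ with $\widetilde u_n \to u$ strongly; minimality gives $J(u_n) \le J(\widetilde u_n) \to J(u)$, while weak lower semicontinuity gives $\liminf J(u_n) \ge J(u)$, hence $\|u_n\| \to \|u\|$, which together with the weak convergence yields the strong convergence.

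For $\gamma_m \Rightarrow$ Mosco, the crucial and expected main obstacle is to upgrade $\gamma_m$-convergence, which only asserts $P_n w \to Pw$ strongly when $w$ is the Riesz representative of an $L^2$ datum, to the statement that $P_n w \to Pw$ strongly for \emph{every} $w \in H^m_0(D)$. The remedy is a density argument: the inclusions $C^\infty_c(D) \subset L^2(D) \subset H^{-m}(D)$ with $C^\infty_c(D)$ dense in $H^{-m}(D)$ imply that the Riesz representatives of $L^2$ functions form a dense subspace of $H^m_0(D)$, and since each $P_n$ is an orthogonal projection of operator norm at most one, a standard three-$\varepsilon$ estimate promotes pointwise strong convergence from this dense subset to all of $H^m_0(D)$.

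With the extended convergence in hand, both Mosco conditions follow. For (M2), given $v \in H^m_0(\Omega)$, set $v_n := P_n v \in H^m_0(\Omega_n)$; since $v = Pv$, the extended convergence gives $v_n \to v$ strongly in $H^m_0(D)$. For (M1), suppose $v_{n_k} \in H^m_0(\Omega_{n_k})$ converges weakly to $v$ in $H^m_0(D)$; for any $w \in H^m_0(D)$, writing $v_{n_k} = P_{n_k} v_{n_k}$ and using the self-adjointness of $P_{n_k}$ gives $(v_{n_k}, w) = (v_{n_k}, P_{n_k} w)$, and since $P_{n_k} w \to Pw$ strongly while $v_{n_k} \rightharpoonup v$ weakly, the right-hand side converges to $(v, Pw) = (Pv, w)$. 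Hence $v = Pv$, so $v \in H^m_0(\Omega)$, completing the proof.
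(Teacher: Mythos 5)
Your proof is correct, and in the direction $\gamma_m \Rightarrow$ Mosco it takes a genuinely different --- and in fact more careful --- route than the paper. The paper handles that direction by taking the weak limit $v$ (resp.\ an arbitrary $v \in H^m_0(\Omega)$ for the second Mosco condition) and declaring $f := (-\Delta)^m v$ to be an $L^2(D)$ function, so that the definition of $\gamma_m$-convergence can be applied directly to this datum; but for a general $v \in H^m_0(D)$ one only has $(-\Delta)^m v \in H^{-m}(D)$, so this step requires justification. Your reformulation of the Dirichlet problem as $u_n = P_n w$, with $P_n$ the orthogonal projection onto $H^m_0(\Omega_n)$ and $w$ the Riesz representative of $f$, followed by the density of the Riesz representatives of $L^2(D)$ in $H^m_0(D)$ and the uniform bound $\Vert P_n \Vert \leqslant 1$, supplies exactly the missing extension: $\gamma_m$-convergence, stated only for $L^2$ data, is upgraded to $P_n w \rightarrow P w$ for \emph{every} $w \in H^m_0(D)$, after which both Mosco conditions follow by elementary Hilbert-space manipulations (self-adjointness of $P_{n_k}$ for the first condition, $P_n v \rightarrow P v = v$ for the second). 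In the converse direction your variational argument (minimality of the energy $J$ plus weak lower semicontinuity to obtain norm convergence) is essentially the paper's, which instead passes to the limit in the identity $\int_D \vert \nabla^m u_{n}\vert^2\,dx = \int_D f u_{n}\,dx$; the two are equivalent. In short, your approach buys rigor at the one delicate point of the equivalence, at the price of introducing the projection formalism; the paper's version is shorter but leans on the unjustified assertion that $(-\Delta)^m v \in L^2(D)$.
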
\leavevmode
\begin{proof}
    Suppose that the sequence $(\Omega_n)_{n\in\mathbb{N}}$ $\gamma_m$-converge to $\Omega$. Let $(v_{n_k})_{k\in\mathbb{N}}$ be a subsequence which weakly converges to $v\in H^m_0(D)$, where $v_{n_k}\in H^m_0(\Omega_{n_k})$. Consider the $L^2(D)$ function $f:= (-\Delta)^m v$ in such a way that $v$ is the unique solution of the Dirichlet problem in $D$. The $\gamma_m-$convergence implies that $(u_{n_k})_{k \in \mathbb{N}}$ strongly converges in $H^m_0(D)$ to $u \in H^m_0(\Omega)$ where $u_{n_k}$ satisfies $(-\Delta)^m u_{n_k} =f$ in $\Omega_{n_k}$ and $u$ satisfies $(-\Delta)^mu= f$ in $\Omega$. It suffices to show that $u = v$. For all $k \in \mathbb{N}$,
    \begin{align*}
        \int_{D}\nabla^m u_{n_k} :  \nabla^m(u_{n_k}-v_{n_k})\; dx &= \int_{\Omega_{n_k}}\nabla^m u_{n_k} : \nabla^m(u_{n_k}-v_{n_k})\; dx \\&=  \int_{\Omega_{n_k}}f(u_{n_k}-v_{n_k})\; dx \\
        &=  \int_{D}f(u_{n_k}-v_{n_k})\; dx,
    \end{align*}
    and as $k \longrightarrow +\infty$, we use the strong convergence of $(u_{n_k})_{k\in \mathbb{N}}$ and the weak convergence of $(v_{n_k})_{k\in \mathbb{N}}$ to get
    \begin{align*}
         \int_{D} \nabla^mu: \nabla^m(u-v)\; dx = \int_D f(u-v) \; dx.
    \end{align*}
    Then according to equality $f:=(-\Delta)^m v$, we have
    \begin{align*}
        \int_D \vert \nabla^m (u - v)\vert^2 dx &= \int_D \nabla^m(u - v) : \nabla^mu \;dx - \int_D \nabla^m(u- v) : \nabla^mv \;dx \\
        &= \int_D f(u-v) \; dx -\int_D(u - v) (-\Delta)^mv \;dx =0.
    \end{align*}
    The domain $D$ is bounded and due to the classical Poincaré inequality, the functions $u$ and $v$ are equals and the first point of Mosco convergence follows. To prove the second one, consider $v\in H^m_0(\Omega) \subseteq H^m_0(D)$ and let $f := (-\Delta)^m v$. The same arguments used before implies that $(u_n)_{n \in \mathbb{N}}$ strongly converges in $H^m_0(D)$ to $u = v$ where $u_n \in H^m_0(\Omega_n)$ is the unique solution of Dirichlet problem with respect to $f$. \newline
    
   Now suppose that $(H^m_0(\Omega_n))_{n\in \mathbb{N}}$ converges in the sense of Mosco to $H^m_0(\Omega)$. Consider $f\in L^2(D)$ and the associated solutions $u_n$ of the Dirichlet problem in $\Omega_n$. For all $n \in \mathbb{N}$,
    \begin{align*}
    \int_D \vert \nabla^m u_n\vert^2 dx = \int_{\Omega_n}\nabla^m u_n: \nabla^m u_n \; dx = \int_{\Omega_n} f u_n \; dx = \int_D fu_n\; dx,
    \end{align*}
    we infer that the sequence $(u_n)_{n\in \mathbb{N}}$ is bounded in $H^m_0(D)$ since
    \begin{align*}
        \Vert f \Vert_{L^2(D)}\Vert u_n\Vert_{L^2(D)} \leqslant \Vert f \Vert_{L^2(D)}\Vert u_n\Vert_{H^m_0(D)}.
    \end{align*}
   Let $(u_{n_k})_{k \in \mathbb{N}}$ be a subsequence which weakly converges to a function $v\in H^m_0(D)$. Using the Mosco convergence, $v\in H^m_0(\Omega)$ and for all $\varphi \in H^m_0(\Omega)$, there exists a sequence  $(\varphi_k)_{k \in \mathbb{N}}$, with $\varphi_k \in H^m_0(\Omega_{n_k})$, strongly converging to $\varphi$ in $H^m_0(D)$. Hence, for all $k \in \mathbb{N}$,
    \begin{align*}
        \int_D \nabla^m u_{n_k}: \nabla^m \varphi_k \;dx =  \int_{\Omega_{n_k}} \nabla^m u_{n_k}: \nabla^m \varphi_k \;dx = \int_{\Omega_{n_k}}f \varphi_k \;dx = \int_D f \varphi_k \;dx,
    \end{align*}
   and using the strong convergence of $(\varphi_k)_{k \in \mathbb{N}}$ and the weak convergence of $(u_{n_k})_{n\in \mathbb{N}}$ as $k \longrightarrow + \infty$, we obtain
    \begin{align*}
        \int_\Omega\nabla^m v :\nabla^m \varphi \; dx =\int_D \nabla^m  v :\nabla^m  \varphi \; dx = \int_D f \varphi \;dx = \int_\Omega f \varphi \; dx.
    \end{align*}
    The uniqueness of the solution of the Dirichlet problem proves $u = v$. Moreover,
\begin{align*}
    \int_{D}\vert \nabla^m u_{n_k}\vert^2 dx = \int_{\Omega_n}fu_{n_k}\;dx =  \int_{D}fu_{n_k} \;dx,
\end{align*}
and
\begin{align*}
      \int_{D}fu_{n_k}\;dx \xrightarrow[k \rightarrow + \infty]{} \int_D fu \;dx =   \int_{D}\vert \nabla^m u \vert^2 dx.
\end{align*}
This yields
\begin{align*}
    \Vert u_{n_k} \Vert_{H^m_0(D)}\xrightarrow[k \rightarrow + \infty]{}   \Vert u \Vert_{H^m_0(D)},
\end{align*}
and the convergence of the subsequence is strong. By uniqueness of the limit, the whole sequence is strongly converging to $u$ in $H^m_0(D)$.
\end{proof}\leavevmode

\begin{definition} \label{hausdorff}For two closed sets $A,B\subset \R^N$, the Hausdorff distance $d_H(A,B)$ is defined by 
$$d_H(A,B):=\max_{x\in A} {\rm dist}(x,B) + \max_{x\in B} {\rm dist}(x,A).$$
A sequence of closed sets $(A_n)_{n\in \mathbb{N}}$ converges to $A$ for the Hausdorff distance if $d_H(A_n,A) \longrightarrow 0$ as $n \longrightarrow +\infty$. In this case, we will write  $A_n \overset{d_H}{\longrightarrow} A$.
\end{definition}\leavevmode

Next, we define the complementary Hausdorff distance over $\mathscr{O}(D)$ by 
$$ d_{H^c}(\Omega_1, \Omega_2) := d_H(\overline{D} \backslash \Omega_1, \overline{D} \backslash \Omega_2),$$
and one can show that the topolgy induced on $\mathscr{O}(D)$ is compact.
In the sequel we will use the following well known result.\newline

\begin{proposition}\label{compacthausdorff} If $( \Omega_n)_{n\in \mathbb{N}}$ is a sequence in $\mathscr{O}(D)$ such that $\Omega_n \overset{d_{H^c}}{\longrightarrow} \Omega \in \mathscr{O}(D)$, then for any compact set $K\subset \Omega$ there exists $n_0\in \mathbb{N}$ depending on $K$ such that $K\subset \Omega_n$ for all $n\geqslant n_0$.
\end{proposition}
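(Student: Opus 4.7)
The plan is to exploit the positive distance between the compact set $K$ and the closed set $\overline{D}\setminus \Omega$, and translate the $d_{H^c}$ hypothesis into a statement about points of $\overline{D}\setminus \Omega_n$ being close to points of $\overline{D}\setminus \Omega$. The strategy in a single line is: if $K$ met $\Omega_n^c$ for arbitrarily large $n$, then there would be points of $\overline{D}\setminus \Omega_n$ at bounded distance from $\overline{D}\setminus \Omega$, contradicting Hausdorff smallness.

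More concretely, I would first note that since $K\subset \Omega$ is compact and $\overline{D}\setminus \Omega$ is closed (hence compact, being a closed subset of $\overline{D}$), one has $K\cap (\overline{D}\setminus\Omega)=\emptyset$, and therefore
\[
\delta:=\operatorname{dist}(K,\overline{D}\setminus \Omega)>0.
\]
(The set $\overline{D}\setminus \Omega$ is non-empty because $\Omega\subset D$ is open and bounded, so $\partial D\subset \overline{D}\setminus \Omega$.) Next, using the $d_{H^c}$-convergence, I would pick $n_0$ so large that for every $n\geqslant n_0$,
\[
d_H(\overline{D}\setminus \Omega_n,\overline{D}\setminus \Omega)<\delta/2.
\]
In particular, by the very definition of the Hausdorff distance recalled in Definition \ref{hausdorff}, every point of $\overline{D}\setminus\Omega_n$ lies within distance $\delta/2$ of $\overline{D}\setminus \Omega$.

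To conclude, I would argue by contradiction: assume that for some $n\geqslant n_0$ there exists $x\in K$ with $x\notin \Omega_n$. Since $K\subset \Omega\subset D\subset\overline{D}$, this forces $x\in \overline{D}\setminus \Omega_n$. Applying the previous inequality at $x$ yields some $y\in \overline{D}\setminus \Omega$ with $|x-y|<\delta/2$, so that $\operatorname{dist}(x,\overline{D}\setminus \Omega)<\delta/2<\delta$. But $x\in K$, contradicting the definition of $\delta$. Hence $K\subset \Omega_n$ for every $n\geqslant n_0$.

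There is no real obstacle here: the only subtlety worth checking is that the two Hausdorff-convergence suprema make sense, which is guaranteed by the compactness of $\overline{D}\setminus \Omega_n$ and $\overline{D}\setminus \Omega$, and that the reduction $K\cap \Omega_n^c \subset \overline{D}\setminus \Omega_n$ is valid because $K$ already lies inside $D$. Everything else is a direct unpacking of the definitions.
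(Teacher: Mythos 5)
Your argument is correct and is essentially the paper's own proof: both reduce the claim to the positive distance $\delta$ from the compact $K$ to the complement of $\Omega$, and then use the $d_{H^c}$-convergence to force $\overline{D}\setminus\Omega_n$ into the $\delta/2$-neighbourhood of $\overline{D}\setminus\Omega$, which is disjoint from $K$. The only cosmetic difference is that you phrase the last step by contradiction while the paper concludes directly via the triangle inequality.
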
 \leavevmode

\begin{proof}  Since $K$ is compact and $\Omega$ is open, we know that 
$$\inf_{x\in K} {\rm dist}(x,\Omega^c)=:a>0.$$
 By Hausdorff convergence of the complements, there exists $n_0(a)\in \mathbb{N}$ such that for all $n\geqslant n_0(a)$, 
 $$\Omega_n^c \subset  \{y \in \R^N\; | \; {\rm dist}(y,\Omega^c)<a/2\}.$$
We deduce from the triangle inequality  that $\inf_{x\in K} {\rm dist}(x,\Omega_n^c) >0$ for $n$ large enough and in particular $K\subset \Omega_n$.
\end{proof}

We are now ready to state the following result that will directly imply Corollary \ref{main22} written in the introduction.\newline

\begin{proposition} \label{main2} Let $( \Omega_n)_{n\in \mathbb{N}}$ be a sequence in $\mathscr{O}(D)$ such that $\overline{\Omega_n} \overset{d_H}{\longrightarrow} \overline{\Omega}$ and $ \Omega_n \overset{d_{H^c}}{\longrightarrow}  \Omega$ where $\Omega \in \mathscr{O}(D)$. If  $\Omega$  is $(m,2)-$stable, then the sequence $(\Omega_n)_{n\in \mathbb{N}}$ $\gamma_m-$converges to $\Omega$ or equivalently, $(H^m_0(\Omega_n))_{n\in \mathbb{N}}$ converges to $H^m_0(\Omega)$ in the sense of Mosco.
\end{proposition}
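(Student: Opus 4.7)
The plan is to verify the two conditions of Mosco convergence for $(H^m_0(\Omega_n))_{n\in\mathbb{N}}$ and then invoke Proposition~\ref{equiv1} to conclude the $\gamma_m$-convergence. I will use $(m,2)$-stability of $\Omega$ for the weak-limit condition, and the compactness Proposition~\ref{compacthausdorff} together with density of $\mathscr{C}^\infty_c(\Omega)$ in $H^m_0(\Omega)$ for the strong recovery sequence.

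First I would handle the weak-limit condition: given $v_{n_k} \in H^m_0(\Omega_{n_k})$ weakly converging in $H^m_0(D)$ to some $v$, I need to show $v \in H^m_0(\Omega)$. By the Rellich compact embedding $H^m_0(D) \hookrightarrow L^2(D)$, up to a subsequence $v_{n_k} \to v$ strongly in $L^2(D)$, and I may assume pointwise almost-everywhere convergence. Now fix any compact set $K \subset \overline{\Omega}^c$. Since $\overline{\Omega_n} \overset{d_H}{\longrightarrow} \overline{\Omega}$, and $\mathrm{dist}(K,\overline{\Omega})>0$, for $k$ large enough $K \cap \overline{\Omega_{n_k}} = \emptyset$, hence $v_{n_k} = 0$ almost everywhere on $K$. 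Passing to the limit gives $v = 0$ almost everywhere on $K$, and exhausting $\overline{\Omega}^c$ by such compacts yields $v = 0$ almost everywhere on $\overline{\Omega}^c$. Since $v \in H^m(\R^N)$ and $\Omega$ is $(m,2)$-stable, this forces $v \in H^m_0(\Omega)$.

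Next I would produce a recovery sequence. Fix $v \in H^m_0(\Omega)$. By definition, there exist $\varphi_k \in \mathscr{C}^\infty_c(\Omega)$ with $\|\varphi_k - v\|_{H^m_0(D)} \leqslant 1/k$. For each $k$, the support of $\varphi_k$ is compact in $\Omega$, so Proposition~\ref{compacthausdorff} (applied via $\Omega_n \overset{d_{H^c}}{\longrightarrow} \Omega$) gives an integer $n_k$ with $\mathrm{supp}(\varphi_k) \subset \Omega_n$ for all $n \geqslant n_k$, and in particular $\varphi_k \in \mathscr{C}^\infty_c(\Omega_n) \subset H^m_0(\Omega_n)$. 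Without loss of generality $(n_k)$ is strictly increasing; I then set $v_n := \varphi_k$ for $n_k \leqslant n < n_{k+1}$, and $v_n := 0$ for $n < n_1$. By construction $v_n \in H^m_0(\Omega_n)$, and $v_n \to v$ strongly in $H^m_0(D)$ since $\|v_n - v\| \leqslant 1/k \to 0$.

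The main obstacle is the weak-limit condition: one must pass from the fact that $v_{n_k}$ vanishes outside $\overline{\Omega_{n_k}}$ to the conclusion that $v$ lies in $H^m_0(\Omega)$. The only control on the behaviour of $v$ off $\Omega$ is that $v = 0$ almost everywhere on $\overline{\Omega}^c$, which a priori is much weaker than the pointwise $(m-k,2)$-quasi-everywhere vanishing of derivatives required to belong to $H^m_0(\Omega)$. This is precisely the gap bridged by the $(m,2)$-stability hypothesis, and it is essential that the Hausdorff convergence be on the closures $\overline{\Omega_n}$ (not just on $\Omega_n^c$) so that one controls $v$ on the open set $\overline{\Omega}^c$ rather than only on $\Omega^c$. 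The recovery step, by contrast, is a routine density and diagonal argument.
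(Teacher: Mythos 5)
Your proof is correct, and the core technical step is the same as the paper's: passing from $v_{n_k}=0$ a.e.\ outside $\overline{\Omega_{n_k}}$ to $v=0$ a.e.\ on every compact $K\subset\overline{\Omega}^c$ via the Hausdorff convergence of the closures, and then invoking the $(m,2)$-stability of $\Omega$ to conclude $v\in H^m_0(\Omega)$. The difference is organisational. The paper proves the $\gamma_m$-convergence directly: it tracks the specific Dirichlet solutions $u_n$, passes to the limit in the weak formulation using Proposition \ref{compacthausdorff} (supports of test functions eventually contained in $\Omega_{n_k}$), and identifies the weak limit with $u$ once it is known to lie in $H^m_0(\Omega)$. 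You instead verify both Mosco conditions for \emph{arbitrary} sequences $v_{n_k}\in H^m_0(\Omega_{n_k})$ and then call on the equivalence of Proposition \ref{equiv1}. Your decomposition is slightly more modular: the recovery sequence (density of $\mathscr{C}^\infty_c(\Omega)$ in $H^m_0(\Omega)$ plus Proposition \ref{compacthausdorff} and a diagonal choice of indices) is made fully explicit, and the strong convergence of the Dirichlet solutions comes for free from Proposition \ref{equiv1}, whereas the paper's closing assertion that ``the whole sequence strongly converges'' implicitly reuses the norm-convergence argument contained in the proof of that equivalence. Both routes rest on the same two pillars --- Hausdorff convergence of the closures for the weak-limit condition, complementary Hausdorff convergence for the recovery condition --- and neither has a gap; your remark that the closure convergence is what lets one control $v$ on the open set $\overline{\Omega}^c$ (rather than merely on $\Omega^c$) correctly identifies why that hypothesis cannot be dropped.
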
\leavevmode

\begin{proof}[Proof of  Proposition \ref{main2}]
    Consider $f \in L^2(D)$. We know that the sequence $(u_n)_{n \in \mathbb{N}}$ of the Dirichlet problem solutions associated to $f$ is bounded in $H^{m}_0$. There exists a subsequence $(u_{n_k})_{k\in \mathbb{N}}$ which weakly converges to a function $v \in H^m_0(D)$. Let $\varphi \in \mathscr{C}_c^\infty(\Omega)$ be test function. By complementary Haussdorf convergence, there exists an integer $k_0 \in \mathbb{N}$ such that for all $k\geqslant k_0$,
    \begin{align*}
        \textnormal{Supp}(\varphi) \subseteq \Omega_{n_k}.
    \end{align*}
  Thus, for all $k\geqslant k_0$,
    \begin{align*}
        \int_{\Omega} \nabla^m u_{n_k} : \nabla^m \varphi \;dx =\int_{\Omega_{n_k}} \nabla^m u_{n_k} : \nabla^m \varphi \;dx = \int_{\Omega_{n_k} }f\varphi \; dx  =\int_{\Omega }f\varphi \; dx,
    \end{align*}
    and by weak convergence of $(u_{n_k})_{k \geqslant k_0}$ in $H^m_0(D) \supseteq H^m_0(\Omega)$, 
   \begin{align*}
        \int_{\Omega} \nabla^m v : \nabla^m \varphi \;dx  =\int_{\Omega }f\varphi \; dx.
    \end{align*}
  Thanks to the uniqueness of the Dirichlet problem, it suffices to prove that $v\in H^m_0(\Omega)$. In this case, $v = u$ and the whole sequence $(u_n)_{n \in \mathbb{N}}$ strongly converge to $u$. Up to a subsequence, we can assume that $(u_{n_k})_{k \in \mathbb{N}}$ converges almost everywhere to $v$. The functions $u_{n_k}$ vanishes $(m,2)-$quasi everywhere on $\overline{\Omega}_{n_k}^c$ so almost everywhere. By Hausdorff convergence of the adherence we know that for all compact $K\subset \overline{\Omega}^c$ then $K\subset \overline{\Omega}_n^c$ for $n$ large enough thus finaly $v =0$ almost everywhere in $\overline{\Omega}^c$. Using the definition of $(m,2)-$stability, we conclude $v \in H^m_0(\Omega)$.
\end{proof}

%%%%%%%%%%%%%%%%%%%%%%%%%%%%%%%%%%%%%
\section{Proof of Theorem \ref{main3}}

 In this section we give a proof of Theorem \ref{main3} stated in the introduction. \newline
 
\begin{proof}[Proof of Theorem \ref{main3}] Let $\Omega, \Omega_n \subset D$ be  bounded domains as in the statement of Theorem~\ref{main3} that satisfies
$$\Omega_n  \overset{d_{H^c}}{\longrightarrow}  \Omega,$$
and such that \eqref{cond}  holds true for all $\Omega_n$ with the same $\delta_0>0$ and $r_0>0$. We want to prove that $\Omega_n$ $\gamma_m-$converges  to $\Omega$. To this aim we start with a similar argument as in the proof of  Proposition \ref{main2}. Consider $f \in L^2(D)$. We know that the sequence $(u_n)_{n \in \mathbb{N}}$ of the Dirichlet problem solutions associated to $f$ in $\Omega_n$ is bounded in $H^{m}_0(D)$. There exists a subsequence $(u_{n_k})_{k\in \mathbb{N}}$ which weakly converges to a function $v \in H^m_0(D)$. Let $\varphi \in \mathscr{C}_c^\infty(\Omega)$ be test function. By complementary Hausdorff convergence, there exists an integer $k_0 \in \mathbb{N}$ such that for all $k\geqslant k_0$,
    \begin{align*}
        \textnormal{Supp}(\varphi) \subseteq \Omega_{n_k}.
    \end{align*}
  Thus, for all $k\geqslant k_0$,
    \begin{align*}
        \int_{\Omega} \nabla^m u_{n_k} : \nabla^m \varphi \;dx =\int_{\Omega_{n_k}} \nabla^m u_{n_k} : \nabla^m \varphi \;dx = \int_{\Omega_{n_k} }f\varphi \; dx  =\int_{\Omega }f\varphi \; dx,
    \end{align*}
    and by weak convergence of $(u_{n_k})_{k \geqslant k_0}$ in $H^m_0(D) \supseteq H^m_0(\Omega)$, 
   \begin{align*}
        \int_{\Omega} \nabla^m v : \nabla^m \varphi \;dx  =\int_{\Omega }f\varphi \; dx.
    \end{align*}
  Now thanks to the uniqueness of the Dirichlet problem, it suffices to prove that $v\in H^m_0(\Omega)$. In section \ref{sec7} we prove that the class $\mathscr{O}_\textnormal{cap}^{\delta_0, r_0}(D)$ is compact for the complementary Hausdorff convergence. Moreover, since $\Omega$ satisfies the $(\delta_0,r_0)$-capacitary condition we know from Theorem \ref{main}  that $\Omega$ is a $(m,2)-$stable domain. Thus in order to conclude the proof we are left to prove that $v=0$ a.e. in $\overline{\Omega}^c$. From here the proof differs from the one of Theorem \ref{main} because  we do not know anymore  that  $\overline{\Omega_n} \overset{d_H}{\longrightarrow} \overline{\Omega}$. Instead, we shall benefit from the fact that \eqref{cond} holds true for the whole sequence $\Omega_n$ and we will use a construction similar to the one used in the proof of Theorem \ref{main}, but on the functions $u_n$. From now on we will simply denote by $n$ instead of $n_k$ for the subsequence $u_n\rightarrow v$ in $H^m(D)$ as $n \longrightarrow+ \infty$. Let $K\subset \overline{\Omega}^c$ be an arbitrary compact set and let $\varepsilon>0$ be given. Our goal is to prove that $v=0$ a.e. on $K$. For a general closed set $F\subset \R^N$ and $\lambda>0$ we denote by $(F)_\lambda$ the $\lambda-$enlargement of $F$, namely,
$$F_{\lambda}:= \left\{x\in \R^N \;\middle| \; {\rm dist}(x,F)\leqslant \lambda\right\}.$$
By the Hausdorff convergence of $\Omega_n^c$ to $\Omega^c$ we know that there exists $n_0(\varepsilon)\in \mathbb{N}$ such that for all $n\geqslant n_0(\varepsilon)$,
$$\Omega^c \subset (\Omega_n^c)_\varepsilon, \text{ and } \Omega_n^c\subset (\Omega^c)_\varepsilon.$$
From the above we deduce that 
\begin{eqnarray}
K\subset \Omega^c \subset (\Omega_n^c)_\varepsilon \subset (\Omega^c)_{2\varepsilon}. \label{inclusion}
\end{eqnarray}
Next, we want to construct a test function in $\mathscr{C}^\infty_c(\Omega_n)$ which is very close to $u_n$ in $L^2$ and equal to $0$ on $K$. Let us consider the following subset of $\Omega_n$, 
$$ A_{n,\varepsilon}  := \left\{ x \in \Omega_n \; \middle| \; d(x,  \Omega_n^c)\geqslant   10 \varepsilon \right\},$$
and the function 
$$w_{n,\varepsilon}:=u_n {\bf 1}_{A_{n,\varepsilon}}.$$
The main point being that $w_{n,\varepsilon}=0 \text{ in } (\Omega_n^c)_\varepsilon$ and in virtue of \eqref{inclusion} we deduce that $w_{n,\varepsilon}=0$ on $K$. Now we estimate the difference $w_{n,\varepsilon}- u_n$ in $L^2(\R^N)$  using a covering of  $\partial \Omega_n$. More precisely, the infinite family $(B(x,20 \varepsilon))_{x\in \partial\Omega_n}$ is a cover of $\Omega_n \setminus A_{n,\varepsilon}$ and by the  \emph{5B}-covering Lemma  there exists a countably subcover indexed by $(x_i)_{i\in \mathbb{N}} \subset \partial \Omega$ such that $(B(x_i,20\varepsilon))_{i\in \mathbb{N}}$ is a disjoint family,
$$ \Omega_n \setminus A_{n,\varepsilon} \subset\bigcup_{i\in \mathbb{N}} B(x_i, 100\varepsilon), \textnormal{ and } \sum_{i\in \mathbb{N}} \boldsymbol{1}_{B(x_i,100\varepsilon)} \leqslant N_0,$$
for a universal constant $N_0 \in \mathbb{N}$. Then we can estimate, 
$$\int_{D} |w_{n,\varepsilon}-u_n|^2\;dx \leqslant \int_{\Omega_n \setminus A_{n,\varepsilon}} \vert u_n\vert ^2\; dx.$$
The functions $\partial^\beta u_n$ vanishes almost everywhere on the open set $\overline{\Omega_n}^c$, so thanks to our capacitary condition \eqref{cond} we have for $\varepsilon$ small enough
 $$(100 \varepsilon)^{-(N-p)}{\rm Cap}_{1,2}(Z(\partial^\beta u_n)) \geqslant C\varepsilon^{-(N-p)} {\rm Cap}_{1,2}(\overline{\Omega_n}^c \cap B(0,100\varepsilon)) \geqslant C\delta_0.$$
Therefore, the Poincar\'e inequality \eqref{poincare} applies to $\partial^\beta u_n$ in all ball $B(x_i, 100\varepsilon)$ gives
\begin{eqnarray}
\int_{B(x_i,100\varepsilon)} \vert \partial^\beta u_n\vert^2 \; dx \leqslant   C \delta_0^{-1}\varepsilon^2\int_{B(x_i,100\varepsilon)} \vert \nabla  \partial^\beta u_n\vert^2 \; dx.
\end{eqnarray}
We deduce that 
%\begin{align*}
%   \int_{\Omega_n \setminus A_{n,\varepsilon}} \vert u_n\vert^2 \; dx     &\leqslant  \sum_{i\in \mathbb{N}}\int_{B(x_i,100\varepsilon)} \vert u_n\vert^2 \; dx \\
%    &\leqslant C \sum_{i\in \mathbb{N}} \sum_{k=1}^m \varepsilon^{2k}\int_{B(x_i,100\varepsilon)} |\nabla^k u_n|^2\; dx \\
%    &\leqslant CN_0\sum_{k=1}^m  \varepsilon^{2k}\int_{\Omega_n \backslash A_{n,\varepsilon}} |\nabla^m u_n|^2\; dx \\
%    &\leqslant C\sum_{k=1}^m  \varepsilon^{2k} \leqslant C \varepsilon^2,
%\end{align*}
\begin{align*}
   \int_{\Omega_n \setminus A_{n,\varepsilon}} \vert u_n\vert^2 \; dx     &\leqslant  \sum_{i\in \mathbb{N}}\int_{B(x_i,100\varepsilon)} \vert u_n\vert^2 \; dx \\
    &\leqslant C \sum_{i\in \mathbb{N}} \varepsilon^{2m}\int_{B(x_i,100\varepsilon)} |\nabla^m u_n|^2\; dx \\
    &\leqslant CN_0  \varepsilon^{2m}\int_{D} |\nabla^m u_n|^2\; dx \\
    & \leqslant C \varepsilon^2,
\end{align*}
because the sequence $u_n$ is uniformly bounded in $H^1(D)$. In conclusion we have proved the following : for each $\varepsilon>0$, we have $n_0(\varepsilon) \in \mathbb{N}$ such that for all $n \geqslant n_0(\varepsilon)$, there exists 
$w_{n, \varepsilon} \in L^2(D)$ such that $\|w_{n, \varepsilon}-u_n\|_{L^2}\leqslant C\varepsilon$   and $w_{n, \varepsilon}=0$ on $K$. Now for $n$ great enough let $\varepsilon= 2^{-n}$ and let $w_n:=w_{n_0(2^{-n}), 2^{-n}}$. We can assume that $n_0(2^{-n})\rightarrow+\infty$. The function $w_n$ converges to $v$ in $L^2$ because $u_n$ converges to $v$ in $L^2$, and $w_n=0$ on $K$ for all $n \in \mathbb{N}$. Therefore, up to a subsequence, $w_n$ converges a.e. on $K$ and this shows that $u=0$ a.e. on $K$. Since $K$ is arbitrary, this shows that $v=0$ a.e. on $\overline{\Omega}^c$, hence $u\in H^m_0(\Omega)$ because $\Omega$ is $(m,2)-$stable. This achieves the proof.
\end{proof}

\section{Existence for shape optimisation problems under geometrical constraints}\label{sec7}

Let $D \subset \mathbb{R}^N$ be a fixed bounded open set and let $\mathscr{O}_D:=\left\{\Omega \subseteq D \; \middle| \; \Omega \textnormal{ is open} \right\}$ denote all open subsets of $D$. For a shape functional $F : \mathscr{O}_D \longrightarrow \mathbb{R}^+$, it is a natural question to ask if there exists extremal points. In order to answer this question, we introduce a subfamily of $ \mathscr{O}_D $ which is compact for the $\gamma_m-$convergence and satisfies the capacitary condition (\ref{cond}). If $F$ is lower semi-continuous for the $\gamma_m-$convergence, then we use Theorem \ref{main3} to conclude.

The existence of minimizers for shape functionals has been studied in \cite{BuZo94}. They showed that the class of open subset satisfying the $(r_0,\delta_0)-$capacitary condition
$$ \forall x \in \partial \Omega, \forall r < r_0, \quad \frac{\textnormal{Cap}_{1,2}(\Omega^c \cap B(x,r),B(x,2r))}{\textnormal{Cap}_{1,2}(B(x,r),B(x,2r))}\geqslant \delta_0,$$
is compact for $d_{H^c}$ where for any compact $K$,
$$\textnormal{Cap}_{1,2}(K \cap B(x,r),B(x,2r)):= \inf\Vert \varphi \Vert_{H^1}^2 $$
and the infimum is taken over all $\varphi \in \mathscr{C}^\infty_c(B(x,2r))$ such that $\varphi \geqslant 1 \textnormal{ on }K \cap B(x,r)$. This condition is  weaker than \eqref{cond} due to the fact we consider  $\overline{\Omega}^c$ instead of $\Omega^c$. Therefore, the results in  \cite{BuZo94} implies Theorem \ref{shapeopt} in the case of $m=1$. \newline

\begin{proposition}\label{classcompacity}
    Any class of the following list is compact for the complementary Hausdorff convergence : $\mathscr{O}_{\textnormal{convex}}(D)$,  $\mathscr{O}^{\varepsilon}_{\textnormal{cone}}(D)$, $\mathscr{O}_{\textnormal{corks}}^{a,r_0}(D)$.
\end{proposition}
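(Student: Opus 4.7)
The plan is to combine the relative compactness of $(\mathscr{O}(D), d_{H^c})$ recalled earlier in the paper with the fact that each of the three classes is $d_{H^c}$-closed. Given a sequence $(\Omega_n)_{n\in\mathbb{N}}$ in one of the classes, I extract a $d_{H^c}$-convergent subsequence with limit $\Omega \in \mathscr{O}(D)$ and verify that $\Omega$ still satisfies the defining property. Throughout I rely on two facts: (i) Proposition \ref{compacthausdorff}, namely that every compact $K \subset \Omega$ lies in $\Omega_n$ for $n$ large; and (ii) the twin Hausdorff statement inside $\overline{D}$, namely that every $x \in \overline{D} \setminus \Omega$ is the limit of points $y_n \in \overline{D} \setminus \Omega_n$.

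The convex case is the simplest. Fixing $x, y \in \Omega$ and $z = tx + (1-t)y$ with $t \in [0,1]$, I pick $\rho > 0$ with $\overline{B}(x, \rho), \overline{B}(y, \rho) \subset \Omega$ and use (i) to put these compacts inside $\Omega_n$ for $n$ large. Convexity of $\Omega_n$ then gives $B(z, \rho) \subset \Omega_n$ for $n$ large, and if $z \notin \Omega$ were the case, (ii) would produce $z_n \in \Omega_n^c$ with $z_n \to z$, contradicting $B(z, \rho) \subset \Omega_n$.

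For the corkscrew class, given $x \in \partial \Omega$ and $r \leqslant r_0$, the first step is to construct $x_n \in \partial \Omega_n$ with $x_n \to x$. For each $\epsilon > 0$ the ball $B(x, \epsilon)$ contains both a point of $\Omega$ (since $x \in \overline{\Omega}$, hence a point of $\Omega_n$ for $n$ large by (i)) and a point of $\Omega_n^c$ (since $x \in \overline{D} \setminus \Omega$, by (ii)), so connectedness of the ball forces $B(x, \epsilon) \cap \partial \Omega_n \neq \emptyset$; a diagonal extraction delivers the desired $x_n$. Applying the corkscrew condition at $x_n$ for radius $r$ gives $y_n$ with $B(y_n, ar) \subset B(x_n, r) \cap \overline{\Omega_n}^c$; up to a further extraction $y_n \to y$, and the triangle inequality passes the inclusion $B(y_n, ar) \subset B(x_n, r)$ to the open-ball limit $B(y, ar) \subset B(x, r)$. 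For any $p \in B(y, ar)$ one has $p \in B(y_n, ar) \subset \Omega_n^c$ with a strictly positive margin for $n$ large; if $p$ were in $\overline{\Omega}$, a nearby point $z \in \Omega$ would still lie in $B(y_n, ar) \subset \Omega_n^c$ for large $n$, contradicting (i). The external $\varepsilon$-cone case is essentially identical, the only change being that compactness of rigid motions is used to extract a limit position of the cones $C_{x_n}$ with vertex at $x$.

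The main technical obstacle I expect is the boundary approximation $x_n \in \partial \Omega_n$ with $x_n \to x$, and the careful preservation of strict inclusions through the limit in the corkscrew and cone arguments; both are handled by the connectedness/diagonal trick above and by keeping a strictly positive margin in the geometric estimates, which forces the limit ball (resp.\ cone) to sit inside $\overline{\Omega}^c$ (resp.\ $\Omega^c$).
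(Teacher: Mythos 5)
Your proof is correct and, for the corkscrew class, follows essentially the same route as the paper: extract a $d_{H^c}$-convergent subsequence, approximate $x\in\partial\Omega$ by $x_n\in\partial\Omega_n$, apply the corkscrew condition at $x_n$, extract $y_n\to y$, and pass the inclusions to the limit. Two differences are worth noting. First, the paper simply asserts the existence of the boundary points $x_n\in\partial\Omega_n$ with $x_n\to x$ ``by Hausdorff complementary convergence properties'', whereas you justify it via the connectedness of $B(x,\epsilon)$ together with Proposition \ref{compacthausdorff} and the approximation of points of $\overline{D}\setminus\Omega$ by points of $\overline{D}\setminus\Omega_n$; this fills in a detail the paper glosses over, and your ``positive margin'' argument for $B(y,ar)\subset\overline{\Omega}^c$ is an acceptable substitute for the paper's passage through the $\varepsilon$-enlargements $(\overline{D}\setminus\Omega)_\varepsilon$. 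Second, for the convex and external $\varepsilon$-cone classes the paper does not argue at all but cites \cite[Proposition 5.1.1]{BuBut05}, while you give direct proofs; your convex argument is complete (note it uses that $D$ is a ball, hence convex, so that $z\in\overline{D}$ and the complement approximation applies), and your cone argument is only sketched but the missing step --- extracting a convergent subsequence of rigid motions and checking that every point of the limit cone lies in $\Omega^c$ by the same compact-exhaustion contradiction --- is routine. No genuine gap.
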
\leavevmode
\begin{proof}
\begin{enumerate}
    \item \textit{Case} $\mathscr{O}=\mathscr{O}_{\textnormal{convex}}(D),\mathscr{O}^{\varepsilon}_{\textnormal{cone}}(D)$. The proof can be founded in \cite[Proposition $5.1.1$, page $126$]{BuBut05}.
      \item \textit{Case} $\mathscr{O}=\mathscr{O}_{\textnormal{corks}}^{a,r_0}(D)$.  Suppose $(\Omega_n)_{n\in \mathbb{N}}$ is sequence of $(a,r_0)-$corkscrew domains which converges to an open set $\Omega \subset D$. Let $x\in \partial \Omega$ and $r\leqslant r_0$. By Hausdorff complementary convergence properties, there exists a sequence $(x_n)_{n \in \mathbb{N}}$ such that $x_n \in \partial \Omega_n$ and $x_n\longrightarrow x$ as $n\longrightarrow +\infty$. By corkscrew conditions, one finds $B(y_n,ar) \subset \overline{\Omega}_n^c\cap B(x_n,r)$ with, up to a subsequence, $y_n \longrightarrow y$ as   $n\longrightarrow +\infty$.  First of all, it is obvious that $B(y,ar) \subset B(x,r)$, it remains to prove that $B(y,ar) \subset \overline{\Omega}^c$. Let $\varepsilon>0$, from the enlargement characterisation of Hausdorff convergence, there exists $N(\varepsilon)\in \mathbb{N}$ such that for every $n \geqslant N(\varepsilon)$,  $\overline{D}\backslash \Omega_n \subset (\overline{D}\backslash \Omega)_\varepsilon$ where
      $$(\overline{D}\backslash \Omega)_\varepsilon := \left\{x\in \R^N \;\middle| \; {\rm dist}(x,\overline{D}\backslash \Omega)\leqslant \varepsilon\right\}.$$
    Thus $B(y_n,ar) \subset(\overline{D}\backslash \Omega)_\varepsilon  $ and passing to the limit as $n \rightarrow + \infty$, then taking the intersection in $\varepsilon$, we get 
    $$B(y,ar) \subset \bigcap_{\varepsilon>0}(\overline{D}\backslash \Omega)_\varepsilon = \overline{D}\backslash \Omega. $$
The ball $B(y,ar)$ is open so we conclude $B(y,ar) \subset \overline{\Omega}^c \cap B(x,r)$.
\end{enumerate}
\end{proof}\leavevmode
\begin{proposition}\label{classgammacompacity}
    Any class of the following list is compact for the $\gamma_m-$convergence: $\mathscr{O}_{\textnormal{convex}}(D)$,    $\mathscr{O}^{\varepsilon}_{\textnormal{cone}}(D)$, $\mathscr{O}_{\textnormal{corks}}^{a,r_0}(D)$.
\end{proposition}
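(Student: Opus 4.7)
The plan is to combine the complementary Hausdorff compactness already established in Proposition~\ref{classcompacity} with the $\gamma_m$-convergence criterion furnished by Theorem~\ref{main3}. Let $\mathscr{O}$ denote any of the three classes in the statement, and let $(\Omega_n)_{n\in \mathbb{N}}$ be a sequence in $\mathscr{O}$. By Proposition~\ref{classcompacity}, up to extraction we may assume $\Omega_n \xrightarrow{d_{H^c}} \Omega$ for some $\Omega \in \mathscr{O}$. The goal is to show that $\Omega_n \xrightarrow{\gamma_m} \Omega$, and the proof consists in checking that the hypotheses of Theorem~\ref{main3} are all fulfilled.

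First I would verify that the whole sequence lies in a fixed class $\mathscr{O}^{\delta_0,r_0}_{\textnormal{cap}}(D)$ with uniform constants $\delta_0,r_0>0$. This is immediate from the inclusion chains already recorded in the paper: $\mathscr{O}_{\textnormal{convex}}(D) \subseteq \mathscr{O}^{a,r_0}_{\textnormal{corks}}(D)$ (via Lipschitz and Reifenberg-flat), $\mathscr{O}^{\varepsilon}_{\textnormal{cone}}(D)\subseteq \mathscr{O}^{a,r_0}_{\textnormal{corks}}(D)$, and finally $\mathscr{O}^{a,r_0}_{\textnormal{corks}}(D) \subseteq \mathscr{O}^{\delta_1,r_2}_{\textnormal{cap}}(D)$; each of these inclusions preserves the constants, so a single pair $(\delta_0,r_0)$ works for the entire sequence.

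Next I would check that the limit $\Omega$ has negligible boundary, $|\partial \Omega|=0$. Since $\Omega \in \mathscr{O}$ and each of the three classes embeds into some corkscrew class $\mathscr{O}^{a,r_0}_{\textnormal{corks}}(D)$, Proposition~\ref{measureboundary} applies directly and yields $|\partial \Omega|=0$. At this stage all hypotheses of Theorem~\ref{main3} are met: $\Omega \in \mathscr{O}(D)$, $|\partial \Omega|=0$, the sequence $(\Omega_n)$ sits in $\mathscr{O}^{\delta_0,r_0}_{\textnormal{cap}}(D)$, and $\Omega_n \xrightarrow{d_{H^c}} \Omega$. Applying that theorem gives the desired $\gamma_m$-convergence, which proves sequential compactness of $\mathscr{O}$ for the $\gamma_m$-topology.

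The main obstacle I anticipate is essentially bookkeeping rather than a substantial difficulty: one must be careful that the constants $(\delta_0,r_0)$ transmitted through the inclusion chain do not depend on the element of the sequence, which is indeed the content of the inclusions as stated. A secondary subtle point is that we need the limit $\Omega$ itself to be corkscrew (or convex, or with an external cone) in order to invoke Proposition~\ref{measureboundary}; this closedness under $d_{H^c}$ of the ambient class is precisely what Proposition~\ref{classcompacity} provides, so no extra argument is required.
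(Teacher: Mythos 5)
Your proposal is correct and follows essentially the same route as the paper: extract a $d_{H^c}$-convergent subsequence whose limit stays in the class (Proposition \ref{classcompacity}), note that the sequence satisfies the capacitary condition with uniform constants via the stated inclusions, get $|\partial\Omega|=0$ from Proposition \ref{measureboundary}, and conclude with Theorem \ref{main3}. You merely make explicit the verification of the hypotheses of Theorem \ref{main3} that the paper's proof leaves implicit.
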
\leavevmode
\begin{proof}
    Let $\mathscr{O}$ be one of the class of domains listed above and let $(\Omega_n)_{n \in \mathbb{N}}$ be a sequence in $\mathscr{O}$. Because of the compactness of the complementary Hausdorff convergence in $\mathscr{O}(D)$, there exists a subsequence $(\Omega_n)_{n \in \mathbb{N}}$ denoted by the same indices which $d_{H^c}-$converges to $\Omega \in \mathscr{O}(D)$. Using Theorem \ref{main3} and Proposition \ref{measureboundary}, it is sufficient to prove that $\Omega \in \mathscr{O}$, i.e. $\mathscr{O}$ is closed for $d_{H^c}-$convergence. Proposition \ref{classcompacity} concludes the proof.
\end{proof}\leavevmode

\begin{theorem}\label{shapeopt}
    Let $\mathscr{O}$ be a $\gamma_m-$compact class of subset listed in Proposition \ref{classgammacompacity}. Let $F : \mathscr{O} \longrightarrow \overline{\mathbb{R}}$ be a lower semi-continuous functional for the $\gamma_m-$convergence. There exists $\Omega\in \mathscr{O}$ such that 
    $$ F(\Omega) = \inf\left\{F(\omega) \; \middle|\; \omega \in \mathscr{O} \right\}.$$
\end{theorem}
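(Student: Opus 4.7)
The plan is to apply the direct method of the calculus of variations, using the $\gamma_m$-compactness result of Proposition \ref{classgammacompacity} as the crucial compactness ingredient. First I would let $\ell := \inf\{F(\omega) \mid \omega \in \mathscr{O}\} \in \overline{\mathbb{R}}$ and select a minimizing sequence $(\Omega_n)_{n \in \mathbb{N}} \subset \mathscr{O}$ such that $F(\Omega_n) \to \ell$ as $n \to +\infty$. The class $\mathscr{O}$ is non-empty for every choice in the list (for instance $D$ itself, or any small ball contained in $D$, belongs to each of them), so such a minimizing sequence exists.

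Next I would invoke Proposition \ref{classgammacompacity}, which asserts that $\mathscr{O}$ is compact for the $\gamma_m$-convergence. Hence, up to extracting a subsequence (still denoted $(\Omega_n)_{n\in \mathbb{N}}$), there exists $\Omega \in \mathscr{O}$ such that $\Omega_n$ $\gamma_m$-converges to $\Omega$. Note that this step is really where all the work of the paper is used: the $\gamma_m$-compactness of these classes is obtained by combining the complementary-Hausdorff compactness (Proposition \ref{classcompacity}), the stability result Theorem \ref{main3}, and the fact that the limiting domain automatically inherits the geometric condition from the sequence.

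Finally, the lower semi-continuity of $F$ for the $\gamma_m$-convergence yields
\begin{align*}
F(\Omega) \;\leqslant\; \liminf_{n \to +\infty} F(\Omega_n) \;=\; \lim_{n \to +\infty} F(\Omega_n) \;=\; \ell.
\end{align*}
Since $\Omega \in \mathscr{O}$, we also have $F(\Omega) \geqslant \ell$ by definition of the infimum, and therefore $F(\Omega) = \ell$, which exhibits $\Omega$ as the sought minimizer.

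There is no real obstacle here once Proposition \ref{classgammacompacity} is available: the argument is the classical three-line direct-method scheme (minimizing sequence, compactness, lower semi-continuity). The only points to double-check when writing the proof are that the minimizing sequence is well-defined (which requires $\mathscr{O}$ to be non-empty, and allows $\ell = -\infty$ as a harmless case since then $F(\Omega) = -\infty = \ell$ is still valid), and that one correctly cites $\gamma_m$-compactness of the particular class $\mathscr{O}$ at hand from Proposition \ref{classgammacompacity}.
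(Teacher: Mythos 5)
Your proposal is correct and follows exactly the same classical direct-method argument as the paper: minimizing sequence, extraction of a $\gamma_m$-convergent subsequence via Proposition \ref{classgammacompacity}, and conclusion by lower semi-continuity. The extra remarks on non-emptiness of $\mathscr{O}$ and the case $\ell=-\infty$ are harmless additions not present in the paper's version.
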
\leavevmode
\begin{proof}
    Let $(\Omega_n)_{n\in\mathbb{N}}$ be a minimising sequence in $\mathscr{O}$, i.e. $ F(\Omega_n)$ converges to $\inf\left\{F(\omega) \; \middle|\; \omega \in \mathscr{O} \right\}$ as $n \longrightarrow +\infty$. Using Proposition \ref{classgammacompacity}, up to a subsequence there exists an open set $\Omega \in \mathscr{O}$ such that
    $$ \Omega_n  \xrightarrow[n\longrightarrow +\infty]{\gamma_m} \Omega.$$
Then by lower semi-continuity of the functional we get
    $$F(\Omega) \leqslant \underset{n \rightarrow + \infty}{\liminf}F(\Omega_n) =  \inf\left\{F(\omega) \; \middle|\; \omega \in \mathscr{O} \right\},$$
    which finishes the proof.
\end{proof}

%\newpage
%\def\refname{References}

\bibliography{biblio}
\bibliographystyle{plain}

 \end{document}